\documentclass{article}
\usepackage{cite}
\usepackage{amsmath,amssymb,amsthm}
\usepackage{titlesec,hyperref}
\usepackage{color}
\usepackage{fancyhdr}
\usepackage[margin=2.5cm]{geometry}
\usepackage{enumerate}

\newtheorem{theo}{Theorem}[section]
\newtheorem{lemm}[theo]{Lemma}

\newtheorem{prop}[theo]{Proposition}

\numberwithin{equation}{section}

\allowdisplaybreaks

\begin{document}
\title{A new result for the local well-posedness of the generalized Camassa-Holm equations in critial Besov spaces $B^{\frac{1}{p}}_{p,1},1\leq p<+\infty$}

\author{
    Xi $\mbox{Tu}^1$ \footnote{email: 904817751@qq.com} \quad and \quad
    Zhaoyang $\mbox{Yin}^{1,2}$ \footnote{email: mcsyzy@mail.sysu.edu.cn}
    \quad and\quad
    Yingying $\mbox{Guo}^{3}$ \footnote{email: guoyy35@fosu.edu.cn}\\
    $^1\mbox{School}$ of Mathematics and Big Data, Foshan University,\\
    Foshan, 528000, China\\
    $^2\mbox{Faculty}$ of Information Technology,\\
    Macau University of Science and Technology, Macau, China\\
    $^3\mbox{School}$ of Mathematics and Big Data, Foshan University,\\
    Foshan, 528000, China
}

\date{}
\maketitle
\hrule

\begin{abstract}
This paper is devoted to studying the local well-posedness (existence,uniqueness and continuous dependence) for the generalized  Camassa-Holm equations in critial Besov spaces $B^{\frac{1}{p}}_{p,1}$ with $1\leq p<+\infty$, which improves the previous index $s> \max\{\frac{1}{2},\frac{1}{p}\}$ or $s=\frac{1}{p},\ p\in[1,2],\ r=1$ in \cite{linb,tu-yin4}. The main difficulty is to prove the uniqueness, which need to use the Moser-type inequality. To overcome the difficulty, we use the Lagrange coordinate transformation to obtain the uniqueness.
\end{abstract}
Mathematics Subject Classification: 35Q53, 35B10, 35C05\\
\noindent \textit{Keywords}: Local well-posedness, Generalized Camassa-Holm equations, Critial Besov spaces, Lagrangian coordinate transformation.

\vspace*{10pt}

\tableofcontents

\section{Introduction}
In this paper we consider the Cauchy problem for the following generalized Camassa-Holm equation,
\begin{align}\label{E1}
\left\{
\begin{array}{ll}
u_t-u_{txx}=\frac{1}{2}(3u_{x}^2-2u_{x}u_{xxx}-u_{xx}^2),~~~~  t>0,\\[1ex]
u(0,x)=u_{0}(x),
\end{array}
\right.
\end{align}
which can be rewritten as
\begin{align}\label{E2}
\left\{
\begin{array}{ll}
m=u-u_{xx},\\[1ex]
m_t-u_xm_x=-\frac{1}{2}m^2+um+\frac{1}{2}u_{x}^2-\frac{1}{2}u^2
, ~~ t>0,\\[1ex]
m(0,x)=u(0,x)-u_{xx}(0,x)=m_0(x).
\end{array}
\right.
\end{align}
The equation (\ref{E1}) was proposed recently by Novikov in \cite{n1}. He showed that the equation (1.1) is integrable by using as definition of integrability the existence of an infinite hierarchy of quasi-local higher symmetries \cite{n1} and it belongs to the following class \cite{n1}:
\begin{align}\label{E02}
(1-\partial^2_x)u_t=F(u,u_x,u_{xx},u_{xxx}),
\end{align}
which has attracted much interest, particularly in the possible integrable members of (\ref{E02}).

The most celebrated integrable members of (\ref{E02}) which have quadratic nonlinearity are  the well-known Camassa-Holm (CH) equation \cite{Camassa} and the famous Degasperis-Procesi (DP) equation \cite{D-P}:
\begin{align}
(1-\partial^2_x)u_t=3uu_x-2u_{x}u_{xx}-uu_{xxx},\\
(1-\partial^2_x)u_t=4uu_x-3u_{x}u_{xx}-uu_{xxx}.
\end{align}
Both the CH equation and the DP
equation can be regarded as a shallow water wave equation \cite{Camassa, Constantin.Lannes,D-G-H}.  They are completely integrable with a bi-Hamiltonian structure \cite{Constantin-E,D-H-H,Fokas}. That means that the system can be transformed into a linear flow at constant speed in suitable action-angle variables (in the sense of infinite-dimensional Hamiltonian systems), for a large class of initial data \cite{Camassa,Constantin-P,Constantin.mckean,D-H-H}.
 It admits exact the single peakon solutions and and the multi-peakon solutions, which are orbitally stable \cite{Constantin.Strauss}.
 It is worth mentioning that the peaked solitons present the characteristic for the traveling water waves of greatest height and largest amplitude and arise as solutions to the free-boundary problem for incompressible Euler equations over a flat bed, cf. \cite{Camassa.Hyman,Constantin2,Constantin.Escher4,Constantin.Escher5,Toland}.
Another remarkable feature of the CH equation and the DP equation is the so-called wave breaking phenomena  \cite{Constantin,Constantin.Escher3,liy}
The main difference between DP equation and CH equation is that DP equation has short waves \cite{Lu} and the periodic shock waves \cite{E-L-Y}.

Concerning the local well-posedness and ill-posedness for the Cauchy problem of the CH equation in Sobolev spaces and Besov spaces, we refer to \cite{Constantin.Escher,Constantin.Escher2,d1,G-L-M-Y,L-Y,Guillermo}. Global strong solutions to the CH equation were discussed in \cite{Constantin,Constantin.Escher,Constantin.Escher2}. And the finite time blow-up strong solutions to the CH equation were proved in \cite{Constantin,Constantin.Escher,Constantin.Escher2,Constantin.Escher3}. It was shown that there exist the global weak solutions to the CH equation \cite{Constantin.Molinet, Xin.Z.P} and the global conservative and dissipative solutions of CH equation \cite{Bressan.Constantin,Bressan.Constantin2}.

 The
local well-posedness of the Cauchy problem of the DP equation in Sobolev spaces and Besov spaces  were investigated in
\cite{G-L,H-H,y1}. Similar to the CH equation, It was shown that there exist the global strong solutions
\cite{L-Y1,y2,y4} and finite time blow-up solutions
\cite{E-L-Y1, E-L-Y,L-Y1,L-Y2,y1,y2,y3,y4} to the
DP equation. The global weak
solutions was established in \cite{C-K,E-L-Y1,y3,y4}.
\par


The third celebrated integrable member of (\ref{E02}) which has cubic nonlinearity is the known Novikov equation \cite{n1}:
\begin{align}
(1-\partial^2_x)u_t=3uu_{x}u_{xx}+u^2u_{xxx}-4u^2u_x.
\end{align}
It was showed that the Novikov equation is integrable, possesses a bi-Hamiltonian structure, and admits exact peakon solutions $u(t,x)=\pm\sqrt{c}e^{|x-ct|}$ with $c>0$ \cite{Hone}.\\

 The local well-posedness for the Novikov equation in Sobolev spaces and Besov spaces was investigated in \cite{Wu.Yin2,Wu.Yin3,Wei.Yan,Wei.Yan2}. Wu and Yin proved the global existence of strong solutions under some sign conditions \cite{Wu.Yin2}.  Yan, Li and  Zhang studied
 the blow-up phenomena of the strong solutions  \cite{Wei.Yan2}. The global weak solutions for the Novikov equation was established in \cite{Laishaoyong,Wu.Yin}.

Recently, the Cauchy problem of (\ref{E1}) in the Besov spaces $B^{s}_{p,r},~s>max\{2+\frac{1}{p},\frac{5}{2}\}$ and the critical Besov space $B^{\frac{1}{2}}_{2,1}$ has been studied in \cite{linb,tu-yin4}. The global weak solution of (\ref{E1}) was established in \cite{tu-yin6}.
To our best knowledge,  there is no paper concerning the Cauchy problem of (\ref{E1}) in the critical Besov space $B^{\frac{1}{p}}_{p,1},1\leq p<+\infty$, which is we shall investigate in this paper.

The main difficulty is to prove the uniqueness. For instance, one should use the following Moser-type inequality
\begin{align}\label{mosher}
\|fg\|_{{B}^{s_1+s_2-\frac{d}{p}}_{p,1}}\leq C\|f\|_{{B}^{s_1}_{p,1}}\|g\|_{{B}^{s_2}_{p,1}},\quad s_1,s_2\leq \frac{d}{p}, s_1+s_2>d\max\{0,\frac{2}{p}-1\}
\end{align}
to estimate (\ref{E1}).
That is why one need the condition $s>\max\{\frac{5}{2},2+\frac{1}{p}\}$. To overcome the difficulty, we use the Lagrange coordinate transformation in this paper to investigate the uniqueness for the generalized Camassa-Holm equation. Indeed, combining with the estimation of the characteristic $y(t,\xi)$, we will obtain the uniqueness without using \eqref{mosher}.

The rest of our paper is as follows. In the second section, we introduce some preliminaries which will be used in the sequel. In the third section, we give the proof of Theorem \ref{Thm1} by using the Lagrangian coordinate transformation.

\vspace*{2em}
\section{Preliminaries}
\par
In this section, we first recall some basic properties on the Littlewood-Paley theory, which can be found in \cite{B.C.D}.

Let $\chi$ and $\varphi$ be a radical, smooth, and valued in the interval $[0,1]$, belonging respectively to $\mathcal{D}(\mathcal{B})$ and $\mathcal{D}(\mathcal{C})$, where $\mathcal{B}=\{\xi\in\mathbb{R}^d:|\xi|\leq\frac 4 3\},\ \mathcal{C}=\{\xi\in\mathbb{R}^d:\frac 3 4\leq|\xi|\leq\frac 8 3\}$.
Denote $\mathcal{F}$ by the Fourier transform and $\mathcal{F}^{-1}$ by its inverse.
For any $u\in\mathcal{S}'(\mathbb{R}^d)$,  all $j\in\mathbb{Z}$, define
$\Delta_j u=0$ for $j\leq -2$; $\Delta_{-1} u=\mathcal{F}^{-1}(\chi\mathcal{F}u)$; $\Delta_j u=\mathcal{F}^{-1}(\varphi(2^{-j}\cdot)\mathcal{F}u)$ for $j\geq 0$; and $S_j u=\sum_{j'<j}\Delta_{j'}u$.

Let $s\in\mathbb{R},\ 1\leq p,r\leq\infty.$ The nonhomogeneous Besov space $B^s_{p,r}(\mathbb{R}^d)$ is defined by
$$  B^s_{p,r}=B^s_{p,r}(\mathbb{R}^d)=\Big\{u\in S'(\mathbb{R}^d):\|u\|_{B^s_{p,r}}=\big\|(2^{js}\|\Delta_j u\|_{L^p})_j \big\|_{l^r(\mathbb{Z})}<\infty\Big\}.$$

The nonhomogeneous Sobolev space is defined by
$$
H^{s}=H^{s}(\mathbb{R}^d)=\Big\{u\in S'(\mathbb{R}^d):\ u\in L^2_{loc}(\mathbb{R}^d),\ \|u\|^2_{H^s}=\int_{\mathbb{R}^d}(1+|\xi|^2)^s|
\mathcal{F}u(\xi)|^2{d}\xi<\infty\Big\}. $$

The nonhomogeneous Bony's decomposition is defined by
$uv=T_{u}v+T_{v}u+R(u,v)$ with
$$T_{u}v=\sum_{j}S_{j-1}u\Delta_{j}v,\ \ R(u,v)=\sum_{j}\sum_{|j'-j|\leq 1}\Delta_{j}u\Delta_{j'}v.$$

Naturally, we introduce some properties about Besov spaces. For more details, see \cite{B.C.D}.
\begin{prop}\label{Besov}\cite{B.C.D,liy}
	Let $s\in\mathbb{R},\ 1\leq p,p_1,p_2,r,r_1,r_2\leq\infty.$  \\
	{\rm(1)} $B^s_{p,r}$ is a Banach space, and is continuously embedded in $\mathcal{S}'$. \\
	{\rm(2)} If $r<\infty$, then $\lim\limits_{j\rightarrow\infty}\|S_j u-u\|_{B^s_{p,r}}=0$. If $p,r<\infty$, then $C_0^{\infty}$ is dense in $B^s_{p,r}$. \\
	{\rm(3)} If $p_1\leq p_2$ and $r_1\leq r_2$, then $ B^s_{p_1,r_1}\hookrightarrow B^{s-d(\frac 1 {p_1}-\frac 1 {p_2})}_{p_2,r_2}. $
	If $s_1<s_2$, then the embedding $B^{s_2}_{p,r_2}\hookrightarrow B^{s_1}_{p,r_1}$ is locally compact. \\
	{\rm(4)} $B^s_{p,r}\hookrightarrow L^{\infty} \Leftrightarrow s>\frac d p\ \text{or}\ s=\frac d p,\ r=1$. \\
	{\rm(5)} Fatou property: if $(u_n)_{n\in\mathbb{N}}$ is a bounded sequence in $B^s_{p,r}$, then an element $u\in B^s_{p,r}$ and a subsequence $(u_{n_k})_{k\in\mathbb{N}}$ exist such that
	$$ \lim_{k\rightarrow\infty}u_{n_k}=u\ \text{in}\ \mathcal{S}'\quad \text{and}\quad \|u\|_{B^s_{p,r}}\leq C\liminf_{k\rightarrow\infty}\|u_{n_k}\|_{B^s_{p,r}}. $$
	{\rm(6)} Let $m\in\mathbb{R}$ and $f$ be a $S^m$-mutiplier (i.e. f is a smooth function and satisfies that $\forall\ \alpha\in\mathbb{N}^d$,
	$\exists\ C=C(\alpha)$ such that $|\partial^{\alpha}f(\xi)|\leq C(1+|\xi|)^{m-|\alpha|},\ \forall\ \xi\in\mathbb{R}^d)$.
	Then the operator $f(D)=\mathcal{F}^{-1}(f\mathcal{F})$ is continuous from $B^s_{p,r}$ to $B^{s-m}_{p,r}$.
\end{prop}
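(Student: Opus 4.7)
The plan is to treat each of the six items as a standard consequence of the Littlewood-Paley decomposition combined with Bernstein's inequality, essentially as developed in the Bahouri-Chemin-Danchin reference; I would only sketch the main ideas rather than give full proofs, since these are classical.

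For (1), the Banach space structure is inherited from the completeness of $L^p$ and of weighted $\ell^r(\mathbb{Z})$, using the isomorphism $u\mapsto(2^{js}\|\Delta_j u\|_{L^p})_j$. The continuous embedding $B^s_{p,r}\hookrightarrow \mathcal{S}'$ is obtained by pairing $u$ against a Schwartz function $\phi$, splitting the dyadic sum, and using the Fourier support of each $\Delta_j u$ to absorb powers of $2^j$ into rapidly decaying seminorms of $\phi$. For (2), I would write $u-S_j u=\sum_{j'\geq j}\Delta_{j'}u$ and apply dominated convergence on the tail of the $\ell^r$-sum (which requires $r<\infty$); density of $C^\infty_0$ then follows by further truncating in physical space and mollifying.

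Items (3), (4) and (6) all reduce to Bernstein's inequality, $\|\Delta_j u\|_{L^{p_2}}\leq C\, 2^{jd(1/p_1-1/p_2)}\|\Delta_j u\|_{L^{p_1}}$ for $p_1\leq p_2$, applied blockwise and combined with the monotonicity $\ell^{r_1}\hookrightarrow\ell^{r_2}$ for $r_1\leq r_2$. The borderline embedding in (4) is the point where $r=1$ is needed: one wants $\sum_j\|\Delta_j u\|_{L^\infty}<\infty$, and Bernstein turns this into $\sum_j 2^{jd/p}\|\Delta_j u\|_{L^p}$, which is exactly the $B^{d/p}_{p,1}$ norm; for $s>d/p$ a geometric factor gives summability for any $r\leq\infty$. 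Local compactness of the embedding $B^{s_2}_{p,r_2}\hookrightarrow B^{s_1}_{p,r_1}$ uses Rellich on each finite range of dyadic blocks together with the strict decay $2^{j(s_1-s_2)}\to 0$ to control the high-frequency tail. For (6), the hypothesis on $f$ implies that $\varphi(2^{-j}\cdot)f$ rescales to a uniformly bounded family of $\mathcal{S}$-functions modulo a factor $2^{jm}$, which by Young's inequality yields $\|\Delta_j f(D)u\|_{L^p}\leq C\,2^{jm}\|\tilde\Delta_j u\|_{L^p}$ with $\tilde\Delta_j$ a slightly enlarged projector, and the $B^{s-m}_{p,r}$ estimate follows.

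For the Fatou property (5), I would fix $j$ and note that $(\Delta_j u_n)_n$ is bounded in $L^p$ with Fourier support in a fixed compact annulus; a diagonal extraction combined with Banach-Alaoglu (or Rellich on balls in $x$) produces a subsequence and limits $v_j$ in $\mathcal{S}'$. Summing $u:=\sum_j v_j$ in $\mathcal{S}'$ and invoking lower semicontinuity of the $L^p$-norm under weak-$*$ convergence on each block, then of $\ell^r$ under pointwise liminf on the sequence $(\|\Delta_j u_{n_k}\|_{L^p})_j$ (via Fatou's lemma for series), yields $\|u\|_{B^s_{p,r}}\leq C\liminf\|u_{n_k}\|_{B^s_{p,r}}$.

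The main obstacle is not conceptual but bookkeeping: keeping track of constants that depend on $\chi,\varphi$, handling the borderline cases $r=\infty$ or $p=\infty$ in the density statements, and making the diagonal extraction in (5) precise enough that lower semicontinuity can be applied simultaneously at every dyadic scale. Since all of this is carried out in full in \cite{B.C.D}, my write-up would simply verify the statement by citation, indicating the relevant chapter for each item.
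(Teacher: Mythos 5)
The paper offers no proof of this proposition: it is stated as a quotation of standard results from the cited references \cite{B.C.D,liy}, so your plan of verifying it by citation, backed by sketches of the usual Littlewood--Paley/Bernstein arguments, is exactly the same approach and your sketches are consistent with how those references actually prove each item. The only part your outline does not touch is the ``only if'' direction of the embedding criterion in (4) (which requires a lacunary counterexample rather than Bernstein), but since you explicitly defer the full argument to \cite{B.C.D} this does not constitute a gap relative to the paper.
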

\begin{prop}\cite{B.C.D}
	Let $s\in\mathbb{R},\ 1\leq p,r\leq\infty.$
	\begin{equation*}\left\{
		\begin{array}{l}
			B^s_{p,r}\times B^{-s}_{p',r'}\longrightarrow\mathbb{R},  \\
			(u,\phi)\longmapsto \sum\limits_{|j-j'|\leq 1}\langle \Delta_j u,\Delta_{j'}\phi\rangle,
		\end{array}\right.
	\end{equation*}
	defines a continuous bilinear functional on $B^s_{p,r}\times B^{-s}_{p',r'}$. Denote by $Q^{-s}_{p',r'}$ the set of functions $\phi$ in $\mathcal{S}'$ such that
	$\|\phi\|_{B^{-s}_{p',r'}}\leq 1$. If $u$ is in $\mathcal{S}'$, then we have
	$$\|u\|_{B^s_{p,r}}\leq C\sup_{\phi\in Q^{-s}_{p',r'}}\langle u,\phi\rangle.$$
\end{prop}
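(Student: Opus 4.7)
The plan is to handle the two assertions in sequence; the second, the duality estimate, is where the real work lies. For the continuity of the bilinear pairing, I would apply H\"older's inequality termwise: for each pair $(j,j')$ with $|j-j'|\le 1$, $|\langle\Delta_j u,\Delta_{j'}\phi\rangle|\leq\|\Delta_j u\|_{L^p}\|\Delta_{j'}\phi\|_{L^{p'}}$. Inserting factors $2^{js}\cdot 2^{-js}$ and using $2^{-js}\leq 2^{|s|}2^{-j's}$ whenever $|j-j'|\leq 1$, the double sum splits into three single series indexed by $j'-j\in\{-1,0,1\}$, each of the form $\sum_j(2^{js}\|\Delta_j u\|_{L^p})(2^{-j's}\|\Delta_{j'}\phi\|_{L^{p'}})$. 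Applying H\"older in $\ell^r\times\ell^{r'}$ bounds each such series by $\|u\|_{B^s_{p,r}}\|\phi\|_{B^{-s}_{p',r'}}$, which proves continuity up to a universal constant.

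For the duality lower bound, my strategy is a Hahn--Banach-style construction of a near-optimal test function. Set $a_j:=2^{js}\|\Delta_j u\|_{L^p}$, so $(a_j)\in\ell^r$ has norm $\|u\|_{B^s_{p,r}}$. By $\ell^r$--$\ell^{r'}$ duality, choose $(\alpha_j)$ with $\|(\alpha_j)\|_{\ell^{r'}}\leq 1$ and $\sum_j\alpha_j a_j\geq\|u\|_{B^s_{p,r}}-\varepsilon$. For each $j$ pick $\psi_j\in L^{p'}$ with $\|\psi_j\|_{L^{p'}}\leq 1$ and $\langle\Delta_j u,\psi_j\rangle\geq\|\Delta_j u\|_{L^p}-\varepsilon_j$; crucially, I would arrange that $\psi_j$ has Fourier support inside $\mathrm{supp}\,\varphi(2^{-j}\cdot)$ for $j\geq 0$ (respectively $\mathrm{supp}\,\chi$ for $j=-1$), which is achieved by convolving the raw $L^p$-duality functional with a Schwartz kernel whose Fourier transform equals $1$ on the relevant dyadic annulus. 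Then define $\phi:=\sum_j 2^{-js}\alpha_j\psi_j$.

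The main obstacle is verifying both that $\phi\in B^{-s}_{p',r'}$ with controlled norm and that $\langle u,\phi\rangle$, computed via the bilinear pairing just introduced, actually recovers $\sum_j 2^{-js}\alpha_j\langle\Delta_j u,\psi_j\rangle$. Both hinge on the frequency localization of the $\psi_j$: only the three neighbors $\Delta_{j-1}\phi,\Delta_j\phi,\Delta_{j+1}\phi$ receive contributions from $\psi_j$, which yields the block estimate $\|\Delta_{j'}\phi\|_{L^{p'}}\lesssim 2^{-j's}(|\alpha_{j'-1}|+|\alpha_{j'}|+|\alpha_{j'+1}|)$ and therefore $\|\phi\|_{B^{-s}_{p',r'}}\lesssim 1$. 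The same almost-orthogonality forces $\langle u,\phi\rangle\geq\sum_j\alpha_j a_j-\sum_j|\alpha_j|\varepsilon_j\geq\|u\|_{B^s_{p,r}}-2\varepsilon$ once the $\varepsilon_j$ are chosen summable against $|\alpha_j|$. Sending $\varepsilon\to 0$ and absorbing the normalization constant into $C$ gives the stated inequality. A routine approximation is needed when $p=\infty$ or $r=\infty$, since exact norming functionals in $L^1$ or $\ell^1$ need not exist and must be replaced by near-maximizers.
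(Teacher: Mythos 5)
The paper contains no proof of this proposition; it is quoted directly from Bahouri--Chemin--Danchin (their Proposition 2.76), so there is no internal argument to compare yours against and it must be judged on its own merits. Your first half is correct and is the standard argument: termwise H\"older in $L^p\times L^{p'}$, transfer of the weight via $2^{-js}\le 2^{|s|}2^{-j's}$ for $|j-j'|\le 1$, splitting the double sum into the three diagonals $j'-j\in\{-1,0,1\}$, and H\"older in $\ell^r\times\ell^{r'}$.

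The second half does not close as written, for two reasons. First, a sign error in the weight: with $a_j=2^{js}\|\Delta_j u\|_{L^p}$ and $\phi=\sum_j 2^{-js}\alpha_j\psi_j$, your own block estimate $\|\Delta_{j'}\phi\|_{L^{p'}}\lesssim 2^{-j's}\big(|\alpha_{j'-1}|+|\alpha_{j'}|+|\alpha_{j'+1}|\big)$ yields $2^{-j's}\|\Delta_{j'}\phi\|_{L^{p'}}\lesssim 2^{-2j's}(\cdots)$, which is not uniformly in $\ell^{r'}$, and the pairing produces $\sum_j 2^{-2js}\alpha_j a_j$ rather than $\sum_j\alpha_j a_j$; the correct weight is $2^{+js}$. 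Second, and more seriously, the assertion that almost-orthogonality forces $\langle u,\phi\rangle\ge\sum_j\alpha_j a_j-\sum_j|\alpha_j|\varepsilon_j$ is unjustified for your choice of $\psi_j$. What actually enters $\langle u,\phi\rangle$ is $\langle u,\psi_j\rangle$, not $\langle\Delta_j u,\psi_j\rangle$. You build $\psi_j$ from a near-norming functional $g_j\in L^{p'}$ for $\Delta_j u$ by convolving with a kernel whose Fourier transform equals $1$ on the annulus $2^j\mathcal{C}$; that Fourier transform is then necessarily supported on a strictly larger set, so $\widehat{\psi_j}$ overlaps the neighbouring annuli and $\langle u,\psi_j\rangle=\langle\Delta_j u,g_j\rangle+E_j$ with $|E_j|\lesssim\sum_{0<|l-j|\le N_0}\|\Delta_l u\|_{L^p}$ for some fixed $N_0\ge 1$ and with no sign control on $E_j$. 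Summed against $2^{js}\alpha_j$, this error is of the same order $C\,\|u\|_{B^s_{p,r}}$ as the main term and can cancel it entirely. The standard repair (and what is done in the cited reference) is to take $\psi_j=\Delta_j g_j$: by self-adjointness of $\Delta_j$ one has $\langle u,\Delta_j g_j\rangle=\langle\Delta_j u,g_j\rangle$ \emph{exactly}, there are no cross terms, $\widehat{\psi_j}$ is supported in $2^j\mathcal{C}$ itself, and with $\phi=\sum_j 2^{js}\alpha_j\Delta_j g_j$ both $\|\phi\|_{B^{-s}_{p',r'}}\lesssim\|(\alpha_j)\|_{\ell^{r'}}$ and the lower bound on $\langle u,\phi\rangle$ follow. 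Your remarks on near-maximizers when $p=\infty$ or $r=\infty$, and the implicit need to work with finitely supported $(\alpha_j)$ when $\|u\|_{B^s_{p,r}}$ may be infinite, are correct but secondary to these two points.
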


The useful interpolation inequalities are given as follows.
\begin{prop}\label{prop}\cite{B.C.D,liy}
{\rm(1)} If $s_1<s_2$, $\lambda\in (0,1)$ and $(p,r)\in[1,\infty]^2$, then we have
\begin{align*}
&\|u\|_{B^{\lambda s_1+(1-\lambda)s_2}_{p,r}}\leq \|u\|_{B^{s_1}_{p,r}}^{\lambda}\|u\|_{B^{s_2}_{p,r}}^{1-\lambda},\\
&\|u\|_{B^{\lambda s_1+(1-\lambda)s_2}_{p,1}}\leq\frac{C}{s_2-s_1}\Big(\frac{1}{\lambda}+\frac{1}{1-\lambda}\Big) \|u\|_{B^{s_1}_{p,\infty}}^{\lambda}\|u\|_{B^{s_2}_{p,\infty}}^{1-\lambda}.
\end{align*}
{\rm(2)} If $s\in\mathbb{R},\ 1\leq p\leq\infty,\ \varepsilon>0$, a constant $C=C(\varepsilon)$ exists such that
$$ \|u\|_{B^s_{p,1}}\leq C\|u\|_{B^s_{p,\infty}}\ln\Big(e+\frac {\|u\|_{B^{s+\varepsilon}_{p,\infty}}}{\|u\|_{B^s_{p,\infty}}}\Big). $$
\end{prop}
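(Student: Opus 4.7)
The plan is to prove Proposition 2.3 by working directly from the Littlewood–Paley definition of the Besov norms, using only Hölder's inequality on sequences and a dyadic frequency-splitting argument.

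For part (1), first inequality, I would set $a_j=2^{js_1}\|\Delta_j u\|_{L^p}$ and $b_j=2^{js_2}\|\Delta_j u\|_{L^p}$, so that
\begin{equation*}
2^{j(\lambda s_1+(1-\lambda)s_2)}\|\Delta_j u\|_{L^p}=a_j^{\lambda}b_j^{1-\lambda}.
\end{equation*}
Then Hölder's inequality on $\ell^r(\mathbb{Z})$ with conjugate exponents $1/\lambda$ and $1/(1-\lambda)$ gives $\|a^\lambda b^{1-\lambda}\|_{\ell^r}\le \|a\|_{\ell^r}^{\lambda}\|b\|_{\ell^r}^{1-\lambda}$, which is exactly the required bound. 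This is short and routine.

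For part (1), second inequality, the passage from $B^{s_i}_{p,\infty}$ norms to $B^{s}_{p,1}$ costs a factor coming from the summation, so I would split the dyadic sum at an integer $N$ to be chosen. Writing $s=\lambda s_1+(1-\lambda)s_2$ (so $s-s_1=(1-\lambda)(s_2-s_1)>0$ and $s-s_2=-\lambda(s_2-s_1)<0$), I estimate
\begin{equation*}
\|u\|_{B^{s}_{p,1}}\le\sum_{j\le N}2^{j(s-s_1)}\|u\|_{B^{s_1}_{p,\infty}}+\sum_{j>N}2^{j(s-s_2)}\|u\|_{B^{s_2}_{p,\infty}},
\end{equation*}
and sum the two geometric series. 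The crude geometric estimate $\frac{1}{1-2^{-\alpha}}\le \frac{C}{\alpha}$ for small $\alpha>0$ produces the prefactors $\frac{1}{(1-\lambda)(s_2-s_1)}$ and $\frac{1}{\lambda(s_2-s_1)}$. Choosing $N$ so that $2^{N(s_2-s_1)}\sim\|u\|_{B^{s_2}_{p,\infty}}/\|u\|_{B^{s_1}_{p,\infty}}$ balances both terms and yields exactly $\|u\|_{B^{s_1}_{p,\infty}}^{\lambda}\|u\|_{B^{s_2}_{p,\infty}}^{1-\lambda}$ with the constant of the statement. The mild technicality to watch here is that $N$ should be an integer and a priori could be negative, but this is harmless after absorbing a universal constant.

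For part (2), the logarithmic inequality, the strategy is the same splitting but between $s$ and $s+\varepsilon$. The low-frequency block $\sum_{j\le N}2^{js}\|\Delta_j u\|_{L^p}$ is bounded by $(N+C)\|u\|_{B^{s}_{p,\infty}}$, and the high-frequency block $\sum_{j>N}2^{js}\|\Delta_j u\|_{L^p}\le\sum_{j>N}2^{-j\varepsilon}\|u\|_{B^{s+\varepsilon}_{p,\infty}}$ is bounded by $C\varepsilon^{-1}2^{-N\varepsilon}\|u\|_{B^{s+\varepsilon}_{p,\infty}}$. Picking $N\sim\varepsilon^{-1}\log_{2}\bigl(\|u\|_{B^{s+\varepsilon}_{p,\infty}}/\|u\|_{B^{s}_{p,\infty}}\bigr)$ balances the two and produces the logarithmic factor; writing the log in the form $\ln\bigl(e+\cdot\bigr)$ absorbs the case $N\le 0$ where the second norm is no larger than the first.

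The main obstacle is cosmetic rather than structural: extracting the sharp dependence $\frac{1}{s_2-s_1}\bigl(\frac{1}{\lambda}+\frac{1}{1-\lambda}\bigr)$ in part (1) requires tracking the geometric-series denominators carefully, and getting the logarithm in part (2) rather than a power requires the optimisation of $N$ to be performed on the nose. No deeper machinery beyond Hölder and the definition of the Besov norm is needed, so I do not expect any genuine difficulty.
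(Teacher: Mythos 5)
The paper does not prove this proposition at all—it is quoted verbatim (as Proposition 2.22 and the logarithmic interpolation lemma) from the cited references \cite{B.C.D,liy}—so there is no in-paper argument to compare against; your proof is precisely the standard one from that source (H\"older's inequality on the dyadic sequence $2^{js}\|\Delta_j u\|_{L^p}$ for the first estimate, and a frequency splitting at an optimized cutoff $N$ with geometric-series sums for the other two), and it is correct. The only point worth flagging is one you already half-noticed: the bound $\frac{1}{1-2^{-\alpha}}\leq \frac{C}{\alpha}$ is only valid for $\alpha$ in a bounded range, so the stated prefactor $\frac{C}{s_2-s_1}\big(\frac{1}{\lambda}+\frac{1}{1-\lambda}\big)$ is really only meaningful when $\lambda(1-\lambda)(s_2-s_1)$ stays bounded—this imprecision is inherited from the cited statement itself and is harmless for the way the proposition is used in the paper.
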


We now give the 1-D Moser-type estimates which we will use in the following.
\begin{lemm}\label{product}\cite{B.C.D,liy}
The following estimates hold:\\
	{\rm(1)} For any $s>0$ and any $p,\ r$ in $[1,\infty]$, the space $L^{\infty} \cap B^s_{p,r}$ is an algebra, and a constant $C=C(s)$ exists such that
\begin{align*}
		&\|uv\|_{B^s_{p,r}}\leq C(\|u\|_{L^{\infty}}\|v\|_{B^s_{p,r}}+\|u\|_{B^s_{p,r}}\|v\|_{L^{\infty}}), \\
		&\|u\partial_{x}v\|_{B^s_{p,r}}\leq C(\|u\|_{B^{s+1}_{p,r}}\|v\|_{L^{\infty}}+\|u\|_{L^{\infty}}\|\partial_{x}v\|_{B^s_{p,r}}).
\end{align*}
	{\rm(2)} If $1\leq p,r\leq \infty,\ s_1\leq s_2,\ s_2>\frac{1}{p} (s_2 \geq \frac{1}{p}\ \text{if}\ r=1)$ and $s_1+s_2>\max(0, \frac{2}{p}-1)$, there exists $C=C(s_1,s_2,p,r)$ such that
	$$ \|uv\|_{B^{s_1}_{p,r}}\leq C\|u\|_{B^{s_1}_{p,r}}\|v\|_{B^{s_2}_{p,r}}. $$
\end{lemm}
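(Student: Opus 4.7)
The plan is to use Bony's paraproduct decomposition $uv = T_u v + T_v u + R(u,v)$ introduced in Section 2 and to estimate each piece dyadically, exploiting the fact that $\mathrm{supp}\,\widehat{S_{j-1}u\,\Delta_j v}$ sits in an annulus of size $\sim 2^j$ while $\mathrm{supp}\,\widehat{\Delta_j u\,\Delta_{j'}v}$ for $|j-j'|\leq 1$ sits in a ball of that size. All bounds then follow by applying $\Delta_k$, taking $L^p$ norms, multiplying by $2^{ks}$ (or $2^{ks_1}$), and summing in $\ell^r$.

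For part (1), with $s>0$, I would first handle $T_u v$ by observing that $\Delta_k(T_u v)$ is nontrivial only for $|j-k|\leq 2$, whence $\|\Delta_k(T_u v)\|_{L^p}\lesssim \|u\|_{L^\infty}\sum_{|j-k|\leq 2}\|\Delta_j v\|_{L^p}$; this yields $\|T_u v\|_{B^s_{p,r}}\lesssim \|u\|_{L^\infty}\|v\|_{B^s_{p,r}}$, and the same bound for $T_v u$ is symmetric. For $R(u,v)$, since $\Delta_k R(u,v)$ is nonzero only when $j\gtrsim k$, I would place one factor in $L^\infty$ and the other in $L^p$ and use $s>0$ to sum the geometric series in $j\geq k-N$. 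The derivative estimate follows by decomposing $u\,\partial_x v = T_u(\partial_x v) + T_{\partial_x v} u + R(u, \partial_x v)$: the first term produces the $\|u\|_{L^\infty}\|\partial_x v\|_{B^s_{p,r}}$ contribution, while the other two are handled by transferring the derivative onto $u$, using $\|\partial_x v\|_{B^{-1}_{\infty,\infty}}\lesssim \|v\|_{L^\infty}$, to produce the $\|u\|_{B^{s+1}_{p,r}}\|v\|_{L^\infty}$ contribution.

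For part (2), the embedding $B^{s_2}_{p,r}\hookrightarrow L^\infty$ from Proposition \ref{Besov}(4) — valid precisely under the hypothesis on $s_2$ — dispatches $T_v u$ via $\|T_v u\|_{B^{s_1}_{p,r}}\lesssim \|v\|_{L^\infty}\|u\|_{B^{s_1}_{p,r}}$. For $T_u v$ I would use the refined paraproduct estimate $\|T_u v\|_{B^{s_1}_{p,r}}\lesssim \|u\|_{B^{s_1-s_2}_{\infty,\infty}}\|v\|_{B^{s_2}_{p,r}}$ together with the chain $B^{s_1}_{p,r}\hookrightarrow B^{s_1-1/p}_{\infty,\infty}\hookrightarrow B^{s_1-s_2}_{\infty,\infty}$ granted by $s_2\geq 1/p$ and Proposition \ref{Besov}(3). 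The main obstacle is the remainder $R(u,v)$: after applying Bernstein's inequality to each dyadic block one obtains $\|\Delta_j u\,\Delta_j v\|_{L^p}\lesssim 2^{j\max(0,2/p-1)}\|\Delta_j u\|_{L^p}\|\Delta_j v\|_{L^p}$, and the summation over $j\geq k-N$ converges exactly when $s_1+s_2>\max(0,2/p-1)$; this is the threshold that appears in the hypothesis, and balancing the Bernstein cost against the possibly negative $s_1$ and the required $\ell^r$ summability in $k$ is the only genuinely delicate bookkeeping in the argument.
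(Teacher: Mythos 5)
The paper does not prove this lemma at all: it is quoted verbatim from \cite{B.C.D,liy} and used as a black box, so there is no ``paper's proof'' to compare against. Your architecture --- Bony decomposition, spectral localization of $T_uv$ in annuli and of $R(u,v)$ in balls, dyadic summation --- is exactly the standard argument behind the cited result, and your treatment of part (1), including the transfer of the derivative in $T_{\partial_x v}u$ and $R(u,\partial_x v)$ via $\|\partial_x v\|_{B^{-1}_{\infty,\infty}}\lesssim\|v\|_{L^\infty}$, is sound.

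Two steps in part (2) do not work as written. First, the per-block inequality $\|\Delta_j u\,\Delta_j v\|_{L^p}\lesssim 2^{j\max(0,2/p-1)}\|\Delta_j u\|_{L^p}\|\Delta_j v\|_{L^p}$ is false for every $p$: already for $p=2$ it would assert $\|fg\|_{L^2}\lesssim\|f\|_{L^2}\|g\|_{L^2}$ uniformly over functions with spectrum in $\{|\xi|\sim 2^j\}$, and a normalized wave packet $f=g=2^{j/2}\phi(2^jx)e^{i2^jx}$ shows the sharp factor is $2^{jd/p}$, not $2^{j\max(0,2/p-1)}$. With the correct factor $2^{jd/p}$ your summation would only close under $s_1+s_2>d/p$, which is strictly stronger than the stated hypothesis when $p>1$. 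The repair is to harvest the Bernstein gain at the \emph{outer} frequency: for $p\ge 2$, bound $\|\Delta_k(\Delta_j u\,\widetilde\Delta_j v)\|_{L^p}\lesssim 2^{kd/p}\|\Delta_j u\|_{L^p}\|\widetilde\Delta_j v\|_{L^p}$ via $L^{p/2}\to L^p$ Bernstein on the block $\Delta_k$, and for $p<2$ go through $L^1$ at frequency $k$ (cost $2^{kd(1-1/p)}$) after putting one factor in $L^{p'}$ (cost $2^{jd(2/p-1)}$); since $k\le j+N$ in the remainder, the exponent rearranges to $2^{k(d/p-s_2)}2^{-(j-k)(s_1+s_2-d\max(0,2/p-1))}$, which is where both hypotheses $s_2\ge d/p$ and $s_1+s_2>d\max(0,2/p-1)$ actually enter. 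Second, your paraproduct step $\|T_uv\|_{B^{s_1}_{p,r}}\lesssim\|u\|_{B^{s_1-s_2}_{\infty,\infty}}\|v\|_{B^{s_2}_{p,r}}$ is only valid for $s_1-s_2<0$; at the admissible endpoint $s_1=s_2$ the operator $T$ is \emph{not} bounded from $B^{0}_{\infty,\infty}\times B^{s_2}_{p,r}$ to $B^{s_2}_{p,r}$ (one only has $\|S_{j-1}u\|_{L^\infty}\lesssim j\,\|u\|_{B^0_{\infty,\infty}}$). That case must be handled separately, e.g.\ by noting $s_1=s_2$ forces $u\in B^{s_2}_{p,r}\hookrightarrow L^\infty$ and invoking $\|T_uv\|_{B^{s_2}_{p,r}}\lesssim\|u\|_{L^\infty}\|v\|_{B^{s_2}_{p,r}}$ from part (1).
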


Here is the Gronwall lemma.
\begin{lemm}\label{gwl}\cite{B.C.D}
	Let $m(t),\ a(t)\in C^1([0,T]),\ m(t),\ a(t)>0$. Let $b(t)$ is a continuous function on $[0,T]$. Suppose that, for all $t\in [0,T]$,
	$$\frac{1}{2}\frac{{d}}{{d}t}m^2(t)\leq b(t)m^2(t)+a(t)m(t).$$
	Then for any time $t$ in $[0,T]$, we have
	$$m(t)\leq m(0)\exp\int_0^t b(\tau){d}\tau+\int_0^t a(\tau)\exp\big(\int_{\tau}^tb(t'){d}t'\big){d}\tau.$$
\end{lemm}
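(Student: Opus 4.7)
\textbf{Proof plan for Lemma \ref{gwl}.} The plan is to reduce the quadratic differential inequality to a linear one by dividing out a factor of $m(t)$, then apply the standard integrating-factor trick. Because $m,a\in C^1([0,T])$ with $m(t)>0$ strictly, the argument is classical and no regularization is needed.

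First I would observe that, since $m\in C^1$, the chain rule gives $\tfrac{1}{2}\tfrac{d}{dt}m^2(t)=m(t)\tfrac{d}{dt}m(t)$. Substituting this into the hypothesis and dividing by the strictly positive quantity $m(t)$ yields the linear differential inequality
\begin{equation*}
\frac{d}{dt}m(t)\leq b(t)\,m(t)+a(t),\qquad t\in[0,T].
\end{equation*}
This step is where positivity of $m$ is used essentially; if $m$ were merely nonnegative, one would first replace $m$ by $m+\varepsilon$, prove the inequality, and pass to the limit $\varepsilon\downarrow 0$, but here this complication is absent.

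Next I would introduce the integrating factor $E(t):=\exp\!\bigl(-\int_0^t b(\tau)\,d\tau\bigr)$, which is well-defined and smooth because $b$ is continuous. Multiplying the linear inequality by $E(t)$ and reorganizing gives
\begin{equation*}
\frac{d}{dt}\bigl(m(t)E(t)\bigr)\leq a(t)E(t).
\end{equation*}
Integrating this from $0$ to $t$ and using $E(0)=1$ produces
\begin{equation*}
m(t)E(t)-m(0)\leq\int_0^t a(\tau)E(\tau)\,d\tau.
\end{equation*}
Finally, dividing through by $E(t)$ (equivalently, multiplying by $\exp(\int_0^t b(\tau)\,d\tau)$) and writing $E(\tau)/E(t)=\exp(\int_\tau^t b(t')\,dt')$ yields the stated bound
\begin{equation*}
m(t)\leq m(0)\exp\!\int_0^t b(\tau)\,d\tau+\int_0^t a(\tau)\exp\!\Bigl(\int_\tau^t b(t')\,dt'\Bigr)d\tau.
\end{equation*}

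There is no real obstacle in this argument; the only subtlety worth flagging is that the reduction to a linear inequality depends on $m(t)>0$. The continuity assumptions on $a$ and $b$ and the $C^1$ assumption on $m$ are exactly what is needed to justify the chain rule and the integration of $a(t)E(t)$ on $[0,t]$, so the proof is essentially two lines of calculus once the division step is carried out.
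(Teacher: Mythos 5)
Your proof is correct: dividing the hypothesis by the strictly positive $C^1$ function $m(t)$ reduces it to the linear inequality $m'\leq bm+a$, and the integrating-factor computation then gives exactly the stated bound. The paper itself offers no proof of this lemma (it is quoted from \cite{B.C.D}), and your argument is the standard one; your remark that positivity of $m$ is what licenses the division (and that a regularization $m+\varepsilon$ would otherwise be needed) correctly identifies the only delicate point.
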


In the paper, we also need some estimates for the following 1-D transport equation:
\begin{equation}\label{transport}
	\left\{\begin{array}{l}
		f_t+v\partial_{x}f=g,\ x\in\mathbb{R},\ t>0, \\
		f(0,x)=f_0(x).
	\end{array}\right.
\end{equation}

\begin{lemm}\label{existence}\cite{B.C.D}
	Let $1\leq p\leq\infty,\ 1\leq r\leq\infty,\ \theta> -\min(\frac 1 {p}, \frac 1 {p'})$. Let $f_0\in B^{\theta}_{p,r}$, $g\in L^1([0,T];B^{\theta}_{p,r})$, and $v\in L^\rho([0,T];B^{-M}_{\infty,\infty})$ for some $\rho>1$ and $M>0$ such that
	$$
	\begin{array}{ll}
		\partial_{x}v\in L^1([0,T];B^{\frac 1 {p}}_{p,\infty}\cap L^{\infty}), &\ \text{if}\ \theta<1+\frac 1 {p}, \\
		\partial_{x}v\in L^1([0,T];B^{\theta-1}_{p,r}), &\ \text{if}\ \theta>1+\frac 1 {p}\ or\ (\theta=1+\frac 1 {p}\ and\ r=1).
	\end{array}
	$$
	Then the problem \eqref{transport} has a unique solution $f$ in \\
	-the space $C([0,T];B^{\theta}_{p,r})$, if $r<\infty$, \\
	-the space $\Big(\bigcap_{{\theta}'<\theta}C([0,T];B^{{\theta}'}_{p,\infty})\Big)\bigcap C_w([0,T];B^{\theta}_{p,\infty})$, if $r=\infty$.
\end{lemm}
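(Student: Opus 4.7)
The strategy is Littlewood--Paley localization combined with a commutator estimate. Applying $\Delta_j$ to \eqref{transport} yields
$$\partial_t \Delta_j f + v\,\partial_x \Delta_j f = \Delta_j g + R_j, \qquad R_j := [v,\Delta_j]\partial_x f.$$
Multiplying by $|\Delta_j f|^{p-2}\Delta_j f$, integrating in $x$, and integrating by parts on the convective term to produce a factor $\|\partial_x v\|_{L^\infty}$, one obtains
$$\frac{d}{dt}\|\Delta_j f\|_{L^p} \le \frac{C}{p}\|\partial_x v\|_{L^\infty}\|\Delta_j f\|_{L^p} + \|\Delta_j g\|_{L^p} + \|R_j\|_{L^p}.$$
Everything thus reduces to controlling $R_j$ at the Besov level.

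\textbf{Commutator estimate and Gronwall.} Using Bony's decomposition $v\partial_x f = T_v\partial_x f + T_{\partial_x f}v + R(v,\partial_x f)$ together with standard paraproduct bounds, I would prove
$$\bigl\|(2^{j\theta}\|R_j\|_{L^p})_j\bigr\|_{\ell^r} \le C\,V(t)\,\|f\|_{B^\theta_{p,r}},$$
where $V(t) = \|\partial_x v\|_{B^{1/p}_{p,\infty}\cap L^\infty}$ in the low-regularity regime $\theta<1+1/p$, and $V(t) = \|\partial_x v\|_{B^{\theta-1}_{p,r}}$ in the high-regularity regime $\theta>1+1/p$ (or the endpoint $\theta=1+1/p$ with $r=1$). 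The lower bound $\theta>-\min(1/p,1/p')$ is precisely what is required so that $T_{\partial_x f}v$ and $R(v,\partial_x f)$ fall in $B^\theta_{p,r}$. Weighting by $2^{j\theta}$, taking $\ell^r$-norms in $j$, and invoking Lemma \ref{gwl} delivers the a priori estimate
$$\|f(t)\|_{B^\theta_{p,r}} \le e^{C\int_0^t V(\tau)d\tau}\Bigl(\|f_0\|_{B^\theta_{p,r}} + \int_0^t e^{-C\int_0^\tau V(s)ds}\|g(\tau)\|_{B^\theta_{p,r}}d\tau\Bigr).$$

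\textbf{Existence, uniqueness, and time regularity.} Uniqueness is then immediate: apply the a priori bound to the difference of two solutions with $f_0=0$, $g=0$. For existence I would use Friedrichs' mollification: replace \eqref{transport} by $\partial_t f^n + J_n(v\,\partial_x J_n f^n) = J_n g$, solve the resulting ODE on the finite-dimensional frequency-truncated space, verify the same a priori bound uniformly in $n$, extract a weak-$*$ limit, and use the Fatou property of Proposition \ref{Besov}(5) to place the limit in $L^\infty_T B^\theta_{p,r}$. Reading $\partial_t f$ off the equation supplies time regularity. The upgrade from bounded to $C([0,T];B^\theta_{p,r})$ when $r<\infty$ is obtained by approximating the data by smooth functions (density from Proposition \ref{Besov}(2)), exploiting continuity of the smooth solutions, and closing with the a priori estimate; for $r=\infty$ only weak-$*$ continuity survives, which is exactly why the statement degrades to $C_w([0,T];B^\theta_{p,\infty})$.

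\textbf{Main obstacle.} The most delicate ingredient is the commutator estimate at the critical value $\theta=1+1/p$ with $r=1$, where a logarithmic loss must be absorbed by the $\ell^1$-summability; a related subtlety is the borderline threshold $\theta>-\min(1/p,1/p')$, which controls the remainder $R(v,\partial_x f)$ and forces slightly different arguments for $p\ge 2$ and $1\le p<2$. Once this commutator bound is in hand, the rest of the proof is mechanical.
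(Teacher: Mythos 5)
The paper offers no proof of this lemma: it is quoted verbatim from the cited reference \cite{B.C.D} (Theorems 3.14 and 3.19 there), so there is no in-paper argument to compare against. Your sketch reproduces the standard proof from that reference --- Littlewood--Paley localization, the $L^p$ energy estimate on each block with the $\frac{1}{p}\|\partial_x v\|_{L^\infty}$ term from integration by parts, the commutator bound $[v,\Delta_j]\partial_x f$ via Bony's decomposition (Lemma 2.100 in \cite{B.C.D}), Gronwall, regularization for existence, and density of smooth functions for the time continuity when $r<\infty$ --- and correctly identifies the commutator estimate and the threshold $\theta>-\min(\frac1p,\frac1{p'})$ as the delicate points, so it is the right argument and essentially the canonical one.
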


\begin{lemm}\label{priori estimate}\cite{B.C.D,liy}
	Let $1\leq p,r\leq\infty,\ \theta>-min(\frac{1}{p},\frac{1}{p'}).$
	There exists a constant $C$ such that for all solutions $f\in L^{\infty}([0,T];B^{\theta}_{p,r})$ of \eqref{transport} with initial data $f_0$ in $B^{\theta}_{p,r}$ and $g$ in $L^1([0,T];B^{\theta}_{p,r})$,
	$$ \|f(t)\|_{B^{\theta}_{p,r}}\leq \|f_0\|_{B^{\theta}_{p,r}}+\int_0^t\|g(t')\|_{B^{\theta}_{p,r}}{d}t'+\int_0^t V'(t')\|f(t')\|_{B^{\theta}_{p,r}}{d}t' $$
	or
	$$ \|f(t)\|_{B^{\theta}_{p,r}}\leq e^{CV(t)}\Big(\|f_0\|_{B^{\theta}_{p,r}}+\int_0^t e^{-CV(t')}\|g(t')\|_{B^{\theta}_{p,r}}{d}t'\Big) $$
	with
	\begin{equation*}
		V'(t)=\left\{\begin{array}{ll}
			\|\partial_{x}v(t)\|_{B^{\frac 1 p}_{p,\infty}\cap L^{\infty}},\ &\text{if}\ \theta<1+\frac{1}{p}, \\
			\|\partial_{x}v(t)\|_{B^{\theta-1}_{p,r}},\ &\text{if}\ \theta>1+\frac{1}{p}\ \text{or}\ (\theta=1+\frac{1}{p},\ p<\infty, \ r=1),
		\end{array}\right.
	\end{equation*}
	and, if $\theta=\frac 1 p-1,\ 1\leq p\leq 2,\ r=\infty,\ V'(t)=\|\partial_{x}v(t)\|_{B^{\frac 1 p}_{p,1}}$.\\
	If $f=v$, then for all $\theta>0$, $V'(t)=\|\partial_{x}v(t)\|_{L^{\infty}}$.
\end{lemm}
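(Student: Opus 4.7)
\smallskip
\noindent\emph{Proof plan.} The plan is to localize the transport equation via the Littlewood--Paley decomposition, derive an $L^p$ energy inequality on each dyadic block, handle the non-local part as a commutator, then sum in a weighted $\ell^r$ sense and close with the Gronwall lemma. Applying $\Delta_j$ to \eqref{transport} yields
$$ \partial_t \Delta_j f + v\,\partial_x \Delta_j f = \Delta_j g + [v,\Delta_j]\partial_x f. $$
Multiplying by $|\Delta_j f|^{p-2}\Delta_j f$, integrating in $x$, integrating by parts on the transport term (which produces a drift contribution controlled by $\|\partial_x v\|_{L^\infty}$), and then dividing by $\|\Delta_j f\|_{L^p}^{p-1}$ gives
$$ \frac{d}{dt}\|\Delta_j f\|_{L^p} \leq \|\Delta_j g\|_{L^p} + \bigl\|[v,\Delta_j]\partial_x f\bigr\|_{L^p} + \tfrac{1}{p}\|\partial_x v\|_{L^\infty}\|\Delta_j f\|_{L^p}. $$
Integrating in time, multiplying by $2^{j\theta}$, and taking the $\ell^r$-norm in $j$ then yields an integral inequality in $\|f(t)\|_{B^\theta_{p,r}}$ to which Lemma \ref{gwl} applies, producing both forms of the claim.

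The technical heart of the argument is the commutator bound
$$ \bigl\|\bigl(2^{j\theta}\|[v,\Delta_j]\partial_x f\|_{L^p}\bigr)_j\bigr\|_{\ell^r} \leq C\,V'(t)\,\|f\|_{B^\theta_{p,r}}. $$
I would prove it by applying Bony's decomposition $v\,\partial_x f = T_v \partial_x f + T_{\partial_x f} v + R(v,\partial_x f)$, commuting $\Delta_j$ past each piece, and using the support properties of $\Delta_j$ together with the standard continuity estimates for paraproducts and remainders. The assumption $\theta > -\min(1/p, 1/p')$ is precisely what is needed to sum the remainder term in $B^\theta_{p,r}$. The split at $\theta = 1 + 1/p$ in the definition of $V'(t)$ reflects which term dominates: for $\theta < 1 + 1/p$ one only needs the Lipschitz-type bound $\|\partial_x v\|_{B^{1/p}_{p,\infty}\cap L^\infty}$ coming from the continuity of $T_v$, whereas for larger $\theta$ (and the endpoint $\theta=1+1/p$ with $r=1$) the contribution of $T_{\partial_x f} v$ forces the higher-regularity norm $\|\partial_x v\|_{B^{\theta-1}_{p,r}}$.

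For the special case $f = v$, I would exploit the cancellation coming from $v\,\partial_x v = \tfrac{1}{2}\partial_x(v^2)$ (or, equivalently, reorganize the paraproduct so that the derivative always falls on the smoother factor) to rewrite $[v,\Delta_j]\partial_x v$ in a way that reduces the required control on $v$ to $\|\partial_x v\|_{L^\infty}$ alone, for every $\theta > 0$. The main obstacle of the proof is the case distinction in the commutator estimate and, in particular, verifying that the paraproduct/remainder bounds remain valid down to $\theta = -\min(1/p,1/p')$; once the commutator estimate is in hand, the energy inequality, the weighted summation in $j$, and the application of Lemma \ref{gwl} are all routine.
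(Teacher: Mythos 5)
The paper does not prove this lemma: it is quoted as a known a priori estimate for linear transport equations with a citation to Bahouri--Chemin--Danchin and Li--Olver, so the ``paper's proof'' is exactly the standard argument in those references. Your sketch (dyadic localization, $L^p$ energy estimate on each block, the commutator bound $\|(2^{j\theta}\|[v,\Delta_j]\partial_x f\|_{L^p})_j\|_{\ell^r}\leq CV'(t)\|f\|_{B^{\theta}_{p,r}}$ via Bony's decomposition, then Gronwall) is precisely that argument, including the correct explanation of the threshold $\theta=1+\frac1p$ and of the special case $f=v$, so it matches the intended proof.
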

\begin{lemm}\label{continuous}\cite{B.C.D,liy}
Let $1\leq p<\infty$. Define $\overline{\mathbb{N}}=\mathbb{N}\cup\{\infty\}$. Suppose $f\in L^{1}\big([0,T];B^{\frac{1}{p}}_{p,1}\big)$ and $a_0\in B^{\frac{1}{p}}_{p,1}$. For $n\in\overline{\mathbb{N}}$, denote by $a^n\in C\big([0,T];B^{\frac{1}{p}}_{p,1}\big)$ the solution of
\begin{equation}\label{an}
\left\{\begin{array}{l}
\partial_{t}a^n+A^n\partial_{x}a^n=f,\\
a^n(0,x)=a_0(x).
\end{array}\right.
\end{equation}
Assume for some $\beta\in L^{1}(0,T)$, $\sup\limits_{n\in\overline{\mathbb{N}}}\|A^n\|_{B^{1+\frac{1}{p}}_{p,1}}\leq \beta(t)$.
If $A^{n}$ converges to $A^{\infty}$ in $L^{1}\big([0,T];B^{\frac{1}{p}}_{p,1}\big)$, then the sequence $\{a^n\}_{n\in\mathbb{N}}$ converges to $a^\infty$ in $C\big([0,T];B^{\frac{1}{p}}_{p,1}\big)$.
\end{lemm}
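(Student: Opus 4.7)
The plan is to study the difference $w^n := a^n - a^\infty$, which solves
\begin{equation*}
\partial_t w^n + A^n \partial_x w^n = (A^\infty - A^n)\partial_x a^\infty, \qquad w^n(0) = 0.
\end{equation*}
Applying Lemma \ref{priori estimate} directly at the critical regularity $\theta = 1/p$ would require the source term in $L^1_T B^{1/p}_{p,1}$, but $\partial_x a^\infty$ only lies in $L^\infty_T B^{1/p-1}_{p,1}$, and the product law of Lemma \ref{product}(2) cannot accommodate this pair of factors at index $1/p$ (the condition $s_1+s_2 > \max(0,2/p-1)$ is borderline). The strategy is therefore to trade this missing product estimate for a regularization at the level of the data.

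For each integer $N \geq 0$ we introduce the auxiliary solution $a^n_N$ of
\begin{equation*}
\partial_t a^n_N + A^n \partial_x a^n_N = S_N f, \qquad a^n_N(0) = S_N a_0.
\end{equation*}
Because $S_N a_0 \in B^{1+1/p}_{p,1}$ and $S_N f \in L^1_T B^{1+1/p}_{p,1}$ (with norms of order $2^N$ times the originals), and because $\partial_x A^n \in L^1_T B^{1/p}_{p,1}$ uniformly in $n$, Lemma \ref{existence} at level $\theta = 1+1/p$ yields $a^n_N \in C_T B^{1+1/p}_{p,1}$, and Lemma \ref{priori estimate} at the same level gives a bound $\|a^n_N\|_{L^\infty_T B^{1+1/p}_{p,1}} \leq C_N$ which is independent of $n$ (using $V_n(T) \leq C\|\beta\|_{L^1}$). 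The difference $a^n - a^n_N$ solves a transport equation with velocity $A^n$, source $(\mathrm{Id}-S_N)f$ and data $(\mathrm{Id}-S_N)a_0$, so Lemma \ref{priori estimate} at $\theta = 1/p$ yields
\begin{equation*}
\|a^n - a^n_N\|_{L^\infty_T B^{1/p}_{p,1}} \leq e^{C\|\beta\|_{L^1}}\bigl(\|(\mathrm{Id}-S_N)a_0\|_{B^{1/p}_{p,1}} + \|(\mathrm{Id}-S_N)f\|_{L^1_T B^{1/p}_{p,1}}\bigr),
\end{equation*}
and the right-hand side tends to $0$ as $N \to \infty$ uniformly in $n \in \overline{\mathbb{N}}$, by Proposition \ref{Besov}(2) (density of low frequencies when $r<\infty$) and dominated convergence.

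For fixed $N$ we then send $n \to \infty$. The equation for $a^n_N - a^\infty_N$ is
\begin{equation*}
\partial_t(a^n_N - a^\infty_N) + A^n \partial_x(a^n_N - a^\infty_N) = (A^\infty - A^n)\partial_x a^\infty_N, \qquad (a^n_N - a^\infty_N)(0) = 0;
\end{equation*}
but now $\partial_x a^\infty_N \in L^\infty_T B^{1/p}_{p,1}$, so the algebra property of $B^{1/p}_{p,1}$ (Lemma \ref{product}(2) with $s_1=s_2=1/p$ and $r=1$) gives $\|(A^\infty - A^n)\partial_x a^\infty_N\|_{B^{1/p}_{p,1}} \leq CC_N\|A^\infty - A^n\|_{B^{1/p}_{p,1}}$, and Lemma \ref{priori estimate} produces
\begin{equation*}
\|a^n_N - a^\infty_N\|_{L^\infty_T B^{1/p}_{p,1}} \leq C'_N \|A^\infty - A^n\|_{L^1_T B^{1/p}_{p,1}} \longrightarrow 0 \quad (n \to \infty).
\end{equation*}
A standard three-$\varepsilon$ argument combining the two estimates delivers $a^n \to a^\infty$ in $L^\infty_T B^{1/p}_{p,1}$, and the convergence is in fact in $C([0,T]; B^{1/p}_{p,1})$ since each $a^n$ already lies there and the convergence is uniform in time.

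The main obstacle is precisely the product-estimate obstruction identified in the first paragraph---the very same Moser-type limitation emphasized in the introduction. The role of the $S_N$ regularization is to unlock the algebra property of $B^{1/p}_{p,1}$ by lifting $a^\infty_N$ to $B^{1+1/p}_{p,1}$; the resulting $N$-dependent constants are harmless because the three-$\varepsilon$ argument fixes $N$ before sending $n\to\infty$.
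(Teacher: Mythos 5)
Your proof is correct: the paper itself states this lemma without proof (citing \cite{B.C.D,liy}), and your regularization-plus-three-$\varepsilon$ argument --- truncating the data and source with $S_N$ to lift $a^\infty_N$ into $B^{1+\frac{1}{p}}_{p,1}$ so that the algebra property of $B^{\frac{1}{p}}_{p,1}$ applies to $(A^\infty-A^n)\partial_x a^\infty_N$, while the tails $a^n-a^n_N$ are controlled uniformly in $n\in\overline{\mathbb{N}}$ by Lemma \ref{priori estimate} --- is precisely the standard argument from those references. You also correctly identify why the naive difference estimate fails at the critical index ($s_1+s_2=\frac{2}{p}-1$ is only borderline in Lemma \ref{product}(2)), which is the whole point of the detour.
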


\section{Local well-posedness}
In this section, we establish local well-posedness of (\ref{E2}) in the critical Besov space $B^{\frac{1}{p}}_{p,1},1\leq p<+\infty$.
Our main result can be ststed as follows:
\begin{theo}\label{Thm1}
Let $u_{0}\in B^{2+\frac{1}{p}}_{p,1}$ $m_0=u_{0}-u_{0xx}\in B^{\frac{1}{p}}_{p,1}$ with $p\in[1,\infty)$. Then there exists a time $T>0$ such that the the generalized CH equation with the initial data $u_{0}$ is locally well-posed in the sense of Hadamard.
\end{theo}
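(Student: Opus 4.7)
The plan is to follow the classical three-step scheme (existence, uniqueness, continuous dependence) for Cauchy problems in critical Besov spaces, with the uniqueness step being the only genuinely new ingredient.

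For existence, I would run a Picard iteration on the $m$-form (\ref{E2}). Setting $m^0 = m_0$ and $u^n = (1-\partial_x^2)^{-1} m^n$, define $m^{n+1}$ as the solution of the linear transport problem
\begin{equation*}
\partial_t m^{n+1} - u^n_x\,\partial_x m^{n+1} = -\tfrac12(m^n)^2 + u^n m^n + \tfrac12(u^n_x)^2 - \tfrac12(u^n)^2, \qquad m^{n+1}(0)=m_0.
\end{equation*}
At the critical regularity $\theta = 1/p$, the transport estimate of Lemma \ref{priori estimate} applies since $u^n_{xx}\in B^{1/p}_{p,1}\hookrightarrow L^\infty$ by Proposition \ref{Besov}(4), while the source sits in $B^{1/p}_{p,1}$ by the algebra property from Lemma \ref{product}(1). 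A standard induction then yields a uniform bound for $(m^n)$ in $L^\infty([0,T];B^{1/p}_{p,1})$ on some $[0,T]$ with $T$ depending only on $\|m_0\|_{B^{1/p}_{p,1}}$. The Fatou property (Proposition \ref{Besov}(5)) extracts a limit $m\in L^\infty([0,T];B^{1/p}_{p,1})$, and reinserting into the equation upgrades this to $m\in C([0,T];B^{1/p}_{p,1})$, with the corresponding $u\in C([0,T];B^{2+1/p}_{p,1})$.

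Uniqueness is the main obstacle. A direct estimate of the difference $m^1 - m^2$ in $B^{1/p}_{p,1}$ fails because the source of (\ref{E2}) generates products such as $(u^1_x - u^2_x)\,m^2_x$ and $(u^1_x - u^2_x)(u^1_x + u^2_x)$ whose control would require the borderline Moser-type inequality (\ref{mosher}), violating its hypothesis $s_i\le d/p$. To bypass (\ref{mosher}) I pass to Lagrangian coordinates: for a given solution let $y(t,\xi)$ solve $\partial_t y(t,\xi) = -u_x(t,y(t,\xi))$, $y(0,\xi)=\xi$, and set $M = m\circ y$, $U = u\circ y$, $V = u_x\circ y$. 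Equation (\ref{E2}) becomes the pointwise $t$-ODE
\begin{equation*}
\partial_t M = -\tfrac12 M^2 + U M + \tfrac12 V^2 - \tfrac12 U^2,
\end{equation*}
containing no $\xi$-derivatives. For two solutions $(m^j,y^j,M^j,U^j,V^j)$ sharing initial data, I would estimate the differences $\delta y,\delta M,\delta U,\delta V$ in $L^p_\xi$ (and $L^\infty_\xi$ for $\delta y$). The representation $u = \tfrac12 e^{-|\cdot|}*m$ shows that $m\mapsto u$ and $m\mapsto u_x$ are convolutions with bounded $L^1$-kernels, so one gets $\|\delta U\|_{L^p_\xi}+\|\delta V\|_{L^p_\xi}\lesssim \|\delta M\|_{L^p_\xi}+\|\delta y\|_{L^\infty_\xi}$ after using the Lipschitz regularity of $u^j$ and $u^j_x$ supplied by $B^{2+1/p}_{p,1}\hookrightarrow W^{2,\infty}$. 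A parallel estimate for $\delta y$ from its flow equation and Gronwall (Lemma \ref{gwl}) then force $\delta M\equiv 0$, hence $m^1 = m^2$. The delicate point—and the main technical burden—is this simultaneous $L^p_\xi$ bookkeeping of $\delta y$ and the pulled-back velocity $\delta V$ under composition with two different flows.

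For continuous dependence, given $u_0^n\to u_0^\infty$ in $B^{2+1/p}_{p,1}$ the uniform existence argument provides a common interval $[0,T]$ with $\sup_n\|m^n\|_{L^\infty_T B^{1/p}_{p,1}}<\infty$. Applying the Lagrangian argument of Step 2 to the pair $(m^n,m^\infty)$ (with a residual term accounting for the initial-data gap $m_0^n - m_0^\infty$) yields convergence of $m^n\to m^\infty$ in a weaker topology. This is then upgraded as follows: writing $m^n$ as the solution of the transport equation $\partial_t m^n + A^n \partial_x m^n = f^n$ with $A^n = -u^n_x$ uniformly bounded in $B^{1+1/p}_{p,1}$ and converging in $L^1([0,T];B^{1/p}_{p,1})$ (by the weak convergence plus the uniform $B^{1/p}_{p,1}$-bound and interpolation), Lemma \ref{continuous} delivers $m^n\to m^\infty$ in $C([0,T];B^{1/p}_{p,1})$. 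This yields continuous dependence and completes the proof of Theorem \ref{Thm1}.
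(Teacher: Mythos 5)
Your Steps 1 and 2 are essentially the paper's own argument: the same Picard iteration \eqref{E00} closed by the transport estimate of Lemma \ref{priori estimate} and the algebra property of $B^{\frac{1}{p}}_{p,1}$, and the same Lagrangian reduction in which $u$ and $u_x$ are rewritten as integrals against the kernel $e^{-|y(t,\xi)-y(t,\eta)|}$ and the differences of $M$, $U$, $U_\xi$, $y_\xi$ are closed by Gronwall in $L^\infty_\xi$ (the paper works with $U_\xi=(u_x\circ y)\,y_\xi$ rather than $V=u_x\circ y$, a cosmetic difference). One sketch-level caveat in Step 1: the Fatou property alone only produces a weak limit with a norm bound; to pass to the limit in the quadratic source one needs the compactness argument the paper actually runs (uniform bound on $\partial_t m_n$ in $B^{\frac{1}{p}-1}_{p,1}$, localization by cutoffs, Ascoli, then interpolation). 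That is routine but it is the real content of the limit passage.

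The genuine gap is in Step 3. Lemma \ref{continuous} is stated for $\partial_t a^n+A^n\partial_x a^n=f$, $a^n(0)=a_0$, with the \emph{same} source $f$ and the \emph{same} datum $a_0$ for every $n\in\overline{\mathbb{N}}$; only the transport field varies. You invoke it for $\partial_t m^n+A^n\partial_x m^n=f^n$ with $f^n=F(m^n,u^n)$ and datum $m_0^n$ both depending on $n$, so the lemma does not apply as written. Nor can you supply the missing hypothesis $f^n\to f^\infty$ in $L^1([0,T];B^{\frac{1}{p}}_{p,1})$ independently: since $f^n$ is quadratic in $m^n$, that convergence is essentially equivalent to the conclusion $m^n\to m^\infty$ in $C([0,T];B^{\frac{1}{p}}_{p,1})$ you are after, and the convergence produced by the Lagrangian stability estimate lives in $L^\infty$-type norms, which cannot be interpolated up to the critical $\ell^1$-based norm $B^{\frac{1}{p}}_{p,1}$ from a mere uniform bound at the same regularity. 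The paper resolves this with the standard splitting $m^n=w^n+z^n$: the part $w^n$ solves the transport equation with the \emph{fixed} source $F(m^\infty,u^\infty)$ and \emph{fixed} datum $m_0^\infty$, so Lemma \ref{continuous} applies to it verbatim, while $z^n$ carries the varying source and the data gap and is controlled by Lemma \ref{priori estimate} together with a Gronwall bootstrap in which $\|m^n-m^\infty\|_{B^{\frac{1}{p}}_{p,1}}\leq\|w^n-w^\infty\|_{B^{\frac{1}{p}}_{p,1}}+\|z^n\|_{B^{\frac{1}{p}}_{p,1}}$ is fed back into the integral inequality \eqref{zn}. Without this decomposition your upgrade step does not close.
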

\begin{proof}
In order to prove Theorem \ref{Thm1}, we proceed as the following five steps.\\
 \textbf{Step 1. Existence.}

  First, we construct approximate solutions which are smooth solutions of some linear equations. Starting for $m_0(t,x)\triangleq m(0,x)=m_0$, we define by induction sequences $(m_{n})_{n\in\mathbb{N}}$  by solving the following linear transport equations:
 \begin{align}\label{E00}
\left\{
\begin{array}{ll}
 \partial_{t}m_{n+1}-\partial_{x}u_{n}\partial_{x}m_{n+1}
 &=\frac{1}{2}(\partial_{x}u_n)^2-\frac{1}{2}(u_n-m_{n})^2
 \\&=u_{n}m_{n}
 +\frac{1}{2}(\partial_{x}u_n)^2-\frac{1}{2}u_n^2-\frac{1}{2}m_{n}^2
 \\&=F(m_{n},u_{n}),\\[1ex]
 m_{n+1}(t,x)|_{t=0}=S_{n+1}m_{0}.\\[1ex]
\end{array}
\right.
\end{align}

We assume that $m_n\in L^{\infty}(0,T;B^{\frac{1}{p}}_{p,1})$.
Since $B^{\frac{1}{p}}_{p,1}$ is an algebra and $B^{\frac{1}{p}}_{p,1}\hookrightarrow L^{\infty}$, we deduce that
\begin{align}\label{12}
F(m_{n},u_{n})=&\nonumber\|\frac{1}{2}(\partial_{x}u_n)^2-\frac{1}{2}(u_n-m_{n})^2\|
_{B^{\frac{1}{p}}_{p,1}}
\\\nonumber \leq&
\frac{1}{2}\|(\partial_{x}u_n)^2\|_{B^{\frac{1}{p}}_{p,1}}
+\frac{1}{2}\|(u_n-m_{n})^2\|_{B^{\frac{1}{p}}_{p,1}}
\\\nonumber \leq&
\|\partial_{x}u_n\|_{B^{\frac{1}{p}}_{p,1}}\|\partial_{x}u_n\|_{L^{\infty}}
+\|u_n-m_{n}\|_{B^{\frac{1}{p}}_{p,1}}\|u_n-m_{n}\|_{L^{\infty}}
\\ \leq&C\|m_{n}\|^2_{B^{\frac{1}{p}}_{p,1}}.
\end{align}
 which leads to
$F(m_{n},u_{n})\in L^{\infty}(0,T;B^{\frac{1}{p}}_{p,1})$.
Hence, from Lemma \ref{existence}, the equation (\ref{E00}) has a global solution $m_{n+1}$ which belongs to $C([0,T);B^{\frac{1}{p}}_{p,1})$ for all positive $T$.


We define that
$U_{n}(t)\triangleq\int^{t}_{0}\|m_n(t')\|_{B^{\frac{1}{p}}_{p,1}}dt'$. By Lemma \ref{priori estimate}, we infer that
\begin{align}\label{11}
\|m_{n+1}\|_{B^{\frac{1}{p}}_{p,1}}
\nonumber\leq& e^{C\int_{0}^{t}\|\partial^2_{x}u_n\|_{B^{\frac{1}{p}}_{p,1}}dt'}
\bigg(\|S_{n+1}m_0\|_{B^{\frac{1}{p}}_{p,1}}
\\\nonumber&+\int^t_0 e^{-C\int_{0}^{t'}\|\partial^2_{x}u_n\|_{B^{\frac{1}{p}}_{p,1}}d\tau}
\|F(m_{n},u_{n})\|_{B^{\frac{1}{p}}_{p,1}}dt'\bigg)
\\ \leq& e^{CU_{n}(t)}\bigg(\|S_{n+1}m_0\|_{B^{\frac{1}{p}}_{p,1}}
+\int^t_0e^{-CU_{n}(t')}\|F(m_{n},u_{n})\|_{B^{\frac{1}{p}}_{p,1}}dt'\bigg).
\end{align}


Fix a $T>0$ such that $2C^2T\|m_0\|_{B^{\frac{1}{p}}_{p,1}}<1.$ Similar to the proof of Theorem 3.1 in \cite{tu-yin4}, we obtain that
\begin{align}\label{013}
\|m_{n}(t)\|_{B^{\frac{1}{p}}_{p,1}}\leq \frac{C\|m_0\|_{B^{\frac{1}{p}}_{p,1}}}{1-2C^2\|m_0\|_{B^{\frac{1}{p}}_{p,1}}t}
\leq \frac{C\|m_0\|_{B^{\frac{1}{p}}_{p,1}}}{1-2C^2\|m_0\|_{B^{\frac{1}{p}}_{p,1}}T}\triangleq \mathbf{M},~~~~\forall t\in[0,T].
\end{align}

Therefore, $(m_{n})_{n \in \mathbb{N}}$ is uniformly bounded in $L^{\infty}(0,T; B^{\frac{1}{p}}_{p,1})$.

Then, we will use the compactness method for the approximating sequence $\{m_n\}_{n\in\mathbb{N}}$ to get a solution $m$ of \eqref{E2}. Since $m_n$ is uniformly bounded in $L^\infty\Big([0,T];B^{\frac{1}{p}}_{p,1}\Big)$, we can deduce from \eqref{12} that $\partial_{t}m_n$ is unformly bounded in $L^\infty\Big([0,T];B^{\frac{1}{p}-1}_{p,1}\Big)$. Thus,
\begin{align*}
	m_n\ \text{is unformly bounded in}\ C\Big([0,T];B^{\frac{1}{p}}_{p,1}\Big)\cap C^{\frac{1}{2}}\Big([0,T];B^{\frac{1}{p}-1}_{p,1}\Big).	
\end{align*}
Let $(\phi_j)_{j\in\mathbb{N}}$ be a sequence of smooth functions with value in $[0,1]$ supported in the ball $B(0,j+1)$ and equal to 1 on $B(0,j)$. Notice that the map $z\mapsto\phi_jz$ is compact from $B^{\frac{1}{p}}_{p,1}$ to $B^{\frac{1}{p}-1}_{p,1}$ by Theorem 2.94 in \cite{B.C.D}. Taking advantage of Ascoli's theorem and Cantor's diagonal process, there exists some function $m_j$ such that for any $j\in\mathbb{N}$, $\phi_jm_n$ tends to $m_j$. From that, we can easily deduce that there exists some function $m$ such that for all $\phi\in\mathcal{D}$, $\phi m_n$ tends to $\phi m$ in $C\Big([0,T];B^{\frac{1}{p}-1}_{p,1}\Big)$. Combining the uniform boundness of $m_n$ and the Fatou property for Besov spaces, we realiy obtain that $m\in L^\infty\Big([0,T];B^{\frac{1}{p}}_{p,1}\Big)$. By virtue of the interpolation, we have $\phi m_n$ tends to $\phi m$ in $C\Big([0,T];B^{\frac{1}{p}-\varepsilon}_{p,1}\Big)$ for any $\varepsilon>0$. Next, it is a routine process to prove that $m$ satifies Eq. (\ref{E2}). Thanks to the right side of the Eq. (\ref{E2}), we get $\partial_{t}m\in C\Big([0,T];B^{\frac{1}{p}-1}_{p,1}\Big)$. In sum, we obtain $m$ satisfies  (\ref{E2}) and belongs to $C\Big([0,T];B^{\frac{1}{p}}_{p,1}\Big)$.

\textbf{Step 2. uniqueness.}

Define $M(t,\xi)=m(t,y(t,\xi))$ and $U(t,\xi)=u(t,y(t,\xi))$, thus, $U_{\xi}(t,\xi)=u_x(t,y(t,\xi))y_{\xi}(t,y(t,\xi))$.

The associated Lagrangian scale of \ref{E2} is the following initial valve problem
\begin{equation}\label{ODE}
	\left\{\begin{array}{ll}
		\frac{{d}y}{{d}t}=-u_{x}\big(t,y(t,\xi)\big),&\quad t>0,\quad \xi\in\mathbb{R},\\
		y(0,\xi)=\xi,&\quad \xi\in\mathbb{R}.
	\end{array}\right.
\end{equation}

 Owing to \ref{E2} and \eqref{ODE}, we can get
\begin{align}
y(t,\xi)&=\xi-\int_0^t\frac{U_{\xi}}{y_{\xi}} d\tau,\label{yn}\\
\frac{{d}}{{d}t}y_\xi(t,\xi)&=(M-U)y_\xi ,\label{ynxi}\\
\frac{{d}}{{d}t}M(t,\xi)&=[-\frac{1}{2}m^2+um+\frac{1}{2}u_{x}^2-\frac{1}{2}u^2]\circ y\nonumber \\
&=-\frac{1}{2}M^2+UM+\frac{1}{2}(\frac{U_{\xi}}{y_\xi})^2-\frac{1}{2}U^2,\label{Mn}\\
\frac{{d}}{{d}t}U(t,\xi)&
=\{G\ast[u_x^2+\frac{1}{2} u_{xx}^{2}]-\frac{1}{2}u^2_x\}
 \circ y\nonumber
 \\&=[\frac{1}{2}\int_{-\infty}^{+\infty}e^{-|y(t,\xi)-x|}(u_x^2+\frac{1}{2}u_{xx}^2){d}x
-\frac{1}{2}u^2_x]\circ y\nonumber
 \\&= \frac{1}{2}\int_{-\infty}^{+\infty}e^{-|y(t,\xi)-y(t,\eta)|}
 [(\frac{U_{\eta}}{y_{\eta}})^2+\frac{1}{2}(U-M)^2]y_{\eta}{d}\eta
-\frac{1}{2}(\frac{U_{\xi}}{y_{\xi}})^2
 \label{Un}\\
\frac{{d}}{{d}t}U_{\xi}(t,\xi)&=[G\ast(u_x^2+\frac{1}{2} u_{xx}^{2})-u_{x}u_{xx}]{y_{\xi}} \circ y\nonumber
 \\&=[\frac{1}{2}\int_{-\infty}^{+\infty}{ sign}\big(y(t,\xi)-x\big)e^{-|y(t,\xi)-x|}(u_x^2+\frac{1}{2}u_{xx}^2){d}x
-u_{x}u_{xx}]{y_{\xi}}\circ y\nonumber
 \\&= \frac{1}{2}\int_{-\infty}^{+\infty}sign\big(y(t,\xi)-y(t,\eta)\big)
 e^{-|y(t,\xi)-y(t,\eta)|}
 [(\frac{U_{\eta}}{y_{\eta}})^2+\frac{1}{2}(U-M)^2]y_{\eta}y_{\xi}{d}\eta
\nonumber
 \\&-U_{\xi}(U-M)
 .\label{Unxi}
\end{align}

Since $u,\ u_{x},\ m$ is uniformly bounded in $C\Big([0,T];B^{\frac{1}{p}}_{p,1}\Big)\hookrightarrow C\Big([0,T]; L^{p}\cap L^{\infty}\Big)$, we can easily deduce that $y_{\xi}$ is bounded in $L^\infty([0,T];L^\infty)$ by the Gronwall inequality. So $M(t,\xi)$, $U(t,\xi)$and $U_{\xi}(t,\xi)$ is bounded in $L^\infty([0,T];L^{\infty})$. Moreover, by \eqref{ynxi} we deduce that $\frac{1}{2}\leq y_\xi\leq C_{u_0}$ for $T>0$ small enough.
Thus, we obtain that $U(t,\xi)\in L^\infty([0,T];L^{p}\cap L^{\infty})$, $y(t,\xi)-\xi\in L^\infty([0,T];L^{p}\cap L^{\infty})$ and $\frac{1}{2}\leq y_\xi(t,\xi)\leq C_{u_0}$ for any $t\in[0,T]$.

Now we prove the uniqueness. Suppose that $m_1=(1-\partial_x^2)u_{1},m_2=(1-\partial_x^2)u_{2}$ are two solutions to \ref{E2}, then $M_{i}(t,\xi)=M_{i}(t,y_{i}(t,\xi))$, $U_{i}(t,\xi)=u_{i}(t,y_{i}(t,\xi))$,
$U_{i\xi}(t,\xi)=u_{ix}(t,y(t,\xi))y_{\xi}(t,y(t,\xi))$ satifies \eqref{yn} -\eqref{Unxi} for $i=1,2$.Hence, \eqref{E2}, \eqref{yn} -\eqref{Unxi} together with the Growall lemma yield
 \begin{align}
\frac{{d}}{{d}t}(M_1-M_2)&
=-\frac{1}{2}(M_{1}-M_{2})(M_{1}+M_{2})+U_{1}(M_{1}-M_{2})+M_{2}(U_{1}-U_{2})
\nonumber
 \\&+\frac{1}{2}\frac{1}{y_{1\xi}y_{2\xi}}(\frac{U_{1\xi}}{y_{1\xi}}
 +\frac{U_{2\xi}}{y_{2\xi}})
[y_{2\xi}(U_{1\xi}-U_{2\xi})+U_{2\xi}(y_{2\xi}-y_{1\xi})]
\nonumber
 \\&-\frac{1}{2}(U_{1}-U_{2})(U_{1}+U_{2})\nonumber
 \\&
\leq C(\|M_{1}-M_{2}\|_{L^{\infty}}
+\|U_{1}-U_{2}\|_{L^{\infty}}
+\|U_{1\xi}-U_{2\xi}\|_{L^{\infty}}
+\|y_{1\xi}-y_{2\xi}\|_{L^{\infty}}),\label{Mn1}\\
\frac{{d}}{{d}t}(U_1-U_2)&
= \frac{1}{2}\int_{-\infty}^{+\infty}
[e^{-|y_{1}(t,\xi)-y_{1}(t,\eta)|}-e^{-|y_{2}(t,\xi)-y_{2}(t,\eta)|}]
 (\frac{U_{1\eta}}{y_{1\eta}})^2{d}\eta\nonumber
 \\&+\frac{1}{2}\int_{-\infty}^{+\infty}e^{-|y_{2}(t,\xi)-y_{2}(t,\eta)|}
 [(\frac{U_{1\eta}}{y_{1\eta}})^2-(\frac{U_{2\eta}}{y_{2\eta}})^2]
 {d}\eta
 \nonumber
 \\&+\frac{1}{2}\int_{-\infty}^{+\infty}
[e^{-|y_{1}(t,\xi)-y_{1}(t,\eta)|}-e^{-|y_{2}(t,\xi)-y_{2}(t,\eta)|}]
 (U_{1}-M_{1})^2y_{1\eta}{d}\eta\nonumber
 \\&+\frac{1}{2}\int_{-\infty}^{+\infty}e^{-|y_{2}(t,\xi)-y_{2}(t,\eta)|}
 [(U_{1}-M_{1})^2y_{1\eta}-(U_{2}-M_{2})^2y_{2\eta}]
 {d}\eta
 \nonumber
 \\&+\frac{1}{2}\frac{1}{y_{1\xi}y_{2\xi}}(\frac{U_{1\xi}}{y_{1\xi}}
 +\frac{U_{2\xi}}{y_{2\xi}})
[y_{2\xi}(U_{1\xi}-U_{2\xi})+U_{2\xi}(y_{2\xi}-y_{1\xi})]\nonumber
 \\&\leq C(\|M_{1}-M_{2}\|_{L^{\infty}}
+\|U_{1}-U_{2}\|_{L^{\infty}}
+\|U_{1\xi}-U_{2\xi}\|_{L^{\infty}}
+\|y_{1\xi}-y_{2\xi}\|_{L^{\infty}}).
 \label{Un1}
\end{align}

Since $y_i(i=1,2)$ is monotonically increasing, then ${\rm sign}\big(y_i(\xi)-y_i(\eta)\big)={\rm sign}\big(\xi-\eta\big)$. Thus, we have
\begin{align}
\frac{{d}}{{d}t}(U_{1\xi}-U_{2\xi})&= \frac{1}{2}\int_{-\infty}^{+\infty}[sign\big(\xi-\eta)\big)
 e^{-|y_{1}(t,\xi)-y_{1}(t,\eta)|}-sign\big(\xi-\eta)\big)
  e^{-|y_{2}(t,\xi)-y_{2}(t,\eta)|}]\nonumber
 \\&
[(\frac{U_{1\eta}}{y_{1\eta}})^2+\frac{1}{2}(U_{1}-M_{1})^2]
 y_{1\eta}y_{1\xi}{d}\eta
\nonumber
 \\&+\frac{1}{2}\int_{-\infty}^{+\infty}sign\big(\xi-\eta)\big)
 e^{-|y_{2}(t,\xi)-y_{2}(t,\eta)|}
\nonumber
 \\& [(\frac{U_{1\eta}}{y_{1\eta}})^2-(\frac{U_{2\eta}}{y_{2\eta}})^2]
 +\frac{1}{2}[(U_{1}-M_{1})^2-(U_{2}-M_{2})^2]{d}\eta
 \nonumber
 \\&-\{U_{1\xi}[(U_{1}-M_{1})-(U_{2}-M_{2})]+(U_{2}-M_{2})(U_{1\xi}-U_{2\xi})\}
 \nonumber
 \\&\triangleq I_1+I_2+I_3.\label{Unxi1}
\end{align}
If $\xi>\eta$ $(\text{or}\ \xi<\eta)$, then $y_i(\xi)>y_i(\eta)$ $\big(\text{or}\ y_i(\xi)<y_i(\eta)\big)$. Hence, we gain
\begin{align}
I_1=&-\int_{-\infty}^{\xi}\left(e^{-(y_1(\xi)-y_1(\eta))}
-e^{-(y_2(\xi)-y_2(\eta))}\right)
[(\frac{U_{1\eta}}{y_{1\eta}})^2+\frac{1}{2}(U_{1}-M_{1})^2]
 y_{1\eta}y_{1\xi}{d}\eta
\notag\\
&
+\int_{\xi}^{+\infty}\left(e^{y_1(\xi)-y_1(\eta)}
-e^{y_2(\xi)-y_2(\eta)}\right)[(\frac{U_{1\eta}}{y_{1\eta}})^2
+\frac{1}{2}(U_{1}-M_{1})^2]
 y_{1\eta}y_{1\xi}{d}\eta\notag\\
=&\int_{-\infty}^{\xi}e^{-(\xi-\eta)}
\left(e^{-\int_0^t(\frac{U_{1\xi}}{y_{1\xi}}
-\frac{U_{1\eta}}{y_{1\eta}}){d}\tau}
-e^{-\int_0^t(\frac{U_{2\xi}}{y_{2\xi}}
-\frac{U_{2\eta}}{y_{2\eta}}){d}\tau}\right)
[(\frac{U_{1\eta}}{y_{1\eta}})^2
+\frac{1}{2}(U_{1}-M_{1})^2]y_{1\eta}y_{1\xi}{d}\eta\notag\\
&-\int_{\xi}^{+\infty}e^{\xi-\eta}\left(e^{\int_0^t (\frac{U_{1\xi}}{y_{1\xi}}
-\frac{U_{1\eta}}{y_{1\eta}}){d}\tau}
-e^{\int_0^t(\frac{U_{1\xi}}{y_{1\xi}}
-\frac{U_{1\eta}}{y_{1\eta}}){d}\tau}\right)
[(\frac{U_{1\eta}}{y_{1\eta}})^2
+\frac{1}{2}(U_{1}-M_{1})^2]y_{1\eta}y_{1\xi}{d}\eta\notag\\
\leq&C(\|U_{1\eta}-U_{2\eta}\|_{L^\infty}+\|y_{1\eta}-y_{2\eta}\|_{L^\infty})
\Big[\int_{-\infty}^{\xi}e^{-(\xi-\eta)}
[(\frac{U_{1\eta}}{y_{1\eta}})^2
\notag\\&+\frac{1}{2}(U_{1}-M_{1})^2]y_{1\eta}y_{1\xi}{d}\eta+\int_{\xi}^{+\infty}
e^{\xi-\eta}[(\frac{U_{1\eta}}{y_{1\eta}})^2
+\frac{1}{2}(U_{1}-M_{1})^2]y_{1\eta}y_{1\xi}{d}\eta\Big]\notag\\
\leq&C(\|U_{1\eta}-U_{2\eta}\|_{L^\infty}+\|y_{1\eta}-y_{2\eta}\|_{L^\infty})\Big[1_{\geq0}(x)e^{-|x|}
\ast[(\frac{U_{1\eta}}{y_{1\eta}})^2
\notag\\&+\frac{1}{2}(U_{1}-M_{1})^2]y_{1\eta}y_{1\xi}+1_{\leq0}(x)e^{-|x|}
\ast[(\frac{U_{1\eta}}{y_{1\eta}})^2
+\frac{1}{2}(U_{1}-M_{1})^2]y_{1\eta}y_{1\xi}\Big].\label{i1}
\notag\\
\leq&C(\|U_{1\eta}-U_{2\eta}\|_{L^\infty}+\|y_{1\eta}-y_{2\eta}\|_{L^\infty})\end{align}
In the same way, we have
\begin{align}
&I_2\leq C(\|M_{1}-M_{2}\|_{L^{\infty}}
+\|U_{1}-U_{2}\|_{L^{\infty}}
+\|U_{1\xi}-U_{2\xi}\|_{L^{\infty}}
+\|y_{1\xi}-y_{2\xi}\|_{L^{\infty}})\label{I2}\\
&I_3\leq C(\|M_{1}-M_{2}\|_{L^{\infty}}
+\|U_{1}-U_{2}\|_{L^{\infty}}
+\|U_{1\xi}-U_{2\xi}\|_{L^{\infty}}).\label{I3}
\end{align}
Hence,  \eqref{Mn1}-\eqref{I3} together with the Growall lemma yield
\begin{align*}
&\quad\|M_{1}-M_{2}\|_{L^{\infty}}
+\|U_{1}-U_{2}\|_{L^{\infty}}
+\|U_{1\xi}-U_{2\xi}\|_{L^{\infty}}
+\|y_{1\xi}-y_{2\xi}\|_{L^{\infty}}\notag\\
&\leq C(\|M_{1}(0)-M_{2}(0)\|_{L^{\infty}}
+\|U_{1}(0)-U_{2}(0)\|_{L^{\infty}}
\notag\\
& \quad +\|U_{1\xi}(0)-U_{2\xi}(0)\|_{L^{\infty}}
+\|y_{1\xi}(0)-y_{2\xi}(0)\|_{L^{\infty}}
)\notag\\
&+C\int_{0}^{T}\quad\|M_{1}-M_{2}\|_{L^{\infty}}
+\|U_{1}-U_{2}\|_{L^{\infty}}
\notag\\&+\|U_{1\xi}-U_{2\xi}\|_{L^{\infty}}
+\|y_{1\xi}-y_{2\xi}\|_{L^{\infty}\cap L^{p}}{d}t\notag\\
&\leq C(\|M_{1}(0)-M_{2}(0)\|_{B^{\frac{1}{p}}_{p,1}}
+\|U_{1}(0)-U_{2}(0)\|_{B^{\frac{1}{p}}_{p,1}}
+\|U_{1\xi}(0)-U_{2\xi}(0)\|_{B^{\frac{1}{p}}_{p,1}})\notag\\
&\leq C\|m_{1}(0)-m_{2}(0)\|_{B^{\frac{1}{p}}_{p,1}},
\end{align*}
where $y_1(0)=y_2(0)=\xi$, $y_{1\xi}(0)=y_{2\xi}(0)=1,$ $M_i(0)=m_i(0),U_i(0)=G\ast m_i(0),U_{i\xi}(0)=G_{\xi}\ast m_i(0)~i=1,2$.

It follows that
\begin{align}
	\|u_{1}-u_{2}\|_{L^\infty}\leq& C\|u_{1}\circ y_{1}-u_{2}\circ y_{1}\|_{L^\infty}\nonumber\\
	\leq& C\|u_{1}\circ y_{1}-u_{2}\circ y_{2}+u_{2}\circ y_{2}-u_{2}\circ y_{1}\|_{L^\infty}\nonumber\\
	\leq& C\|U_{1}-U_{2}\|_{L^\infty}+C\|u_{2x}\|_{L^\infty}\|y_{1}-y_{2}\|_{L^\infty}
\nonumber\\
	\leq& C\|m_{1}(0)-m_{2}(0)\|_{B^{\frac{1}{p}}_{p,1}}.
\label{Un2}\\
	\|u_{1x}-u_{2x}\|_{L^\infty}\leq& C\|u_{1x}\circ y_{1}-u_{2x}\circ y_{1}\|_{L^\infty}\nonumber\\
	\leq& C\|u_{1x}\circ y_{1}-u_{2x}\circ y_{2}+u_{2x}\circ y_{2}-u_{2x}\circ y_{1}\|_{L^\infty}\nonumber\\
	\leq& C(\|\frac{U_{1\xi}}{y_{1\xi}}-\frac{U_{2\xi}}{y_{2\xi}}\|_{L^\infty}
+\|u_{2xx}\|_{L^\infty}\|y_{1}-y_{2}\|_{L^\infty})\nonumber\\
\leq& C(\|U_{1\xi}-U_{2\xi}\|_{L^\infty}+\|y_{1\xi}-y_{2\xi}\|_{L^\infty}
+\|u_{2xx}\|_{L^\infty}\|y_{1}-y_{2}\|_{L^\infty})\nonumber\\
	\leq& C\|m_{1}(0)-m_{2}(0)\|_{B^{\frac{1}{p}}_{p,1}}\label{Un3}.
\end{align}
So if $m_1(0)=m_2(0)$, we can immediately obtain that $u_{1}=u_{2},u_{1x}=u_{2x}$ $a.e\ in\  \mathbb{R}$.

Set $W=m_1-m_2,$ $u_{1}=u_{2}=v,\ u_{1x}=u_{2x}=v_{x}.$
 Hence, we obtain that
\begin{align}
\left\{
\begin{array}{ll}
&\partial_{t}W-v_{x}\partial_{x}W
 =\frac{1}{2}W (2v-m_{1}-m_{2}),\\[1ex]
&W|_{t=0}=m_1(0)=m_2(0),\\[1ex]
\end{array}
\right.
\end{align}

Applying Lemma  \ref{priori estimate} and Gronwall inequality, we deduce that
\begin{align}
\|m_{1}-m_{2}\|_{L^\infty}
\leq C\|m_{1}(0)-m_{2}(0)\|_{B^{\frac{1}{p}}_{p,1}}\label{U1}.
\end{align}
By the embedding $L^\infty\hookrightarrow B^0_{\infty,\infty}$, we get
\begin{align*}
	\|m_{1}-m_{2}\|_{B^0_{\infty,\infty}}\leq C\|m_{1}-m_{2}\|_{L^\infty}\leq C\|m_{1}(0)-m_{2}(0)\|_{B^{\frac{1}{p}}_{p,1}}.	
\end{align*}

\textbf{Step 3. The continuous dependence.}
Then we prove the solution of (\ref{E2}) guaranteed by Theorem \ref{Thm1} depends continuously on the initial data.

Assume that $m^n_{0}$ tends to $m^\infty_{0}$ in $B^{\frac{1}{p}}_{p,1}$,  $u^n_{0}$ tends to $u^\infty_{0}$ in $B^{2+\frac{1}{p}}_{p,1}$ and $m^n,\ m^\infty$ are the solutions of \ref{E2} with the initial data $m^n_{0}, m^\infty_{0}$ respectively. Similar to \cite{ye1}, we can find the solution of \ref{E2} with a common lifespan $T$.
By \textbf{Step 1--Step 2}, we have $m^n,\ m^\infty$ are unformly bounded in $L^{\infty}([0,T];B^{\frac{1}{p}}_{p,1})$, $\ u^n,\ u^\infty$ are unformly bounded in $L^{\infty}([0,T];B^{2+\frac{1}{p}}_{p,1})$  and
\begin{align*}
&\|\big(u^n-u^\infty\big)(t)\|_{B^0_{\infty,\infty}}\leq  C\|m_{0}^n-m_{0}^\infty\|_{B^{\frac{1}{p}}_{p,1}},\ \forall t\in[0,T].
\\&\|\big(u_{x}^n-u_{x}^\infty\big)(t)\|_{B^0_{\infty,\infty}}\leq  C\|m_{0}^n-m_{0}^\infty\|_{B^{\frac{1}{p}}_{p,1}},\ \forall t\in[0,T].
\end{align*}
Taking advantage of the interpolation inequality, we see that
\begin{align*}
&u^n\rightarrow u^\infty, \text{in} C\big([0,T];B^{\frac{1}{p}}_{p,1}\big),
\\&u_{x}^n\rightarrow u_{x}^\infty, \text{in} C\big([0,T];B^{\frac{1}{p}}_{p,1}\big).
\end{align*}


we next only need to prove $m^n\rightarrow m^\infty$ in $C\big([0,T];B^{\frac{1}{p}}_{p,1}\big)$.
Split $m^n$ into $w^n+z^n$ with $(w^n,z^n)$ satisfying
\begin{equation*}
	\left\{\begin{array}{l} \partial_{t}w^n+\partial_{x}u^n\partial_{x}w^n=F(m^{\infty},u^{\infty},u^{\infty}),\\
		w^n(0,x)=m^\infty_0
	\end{array}\right.
\end{equation*}
and
\begin{equation*}
\left\{\begin{array}{l}
\partial_{t}z^n+\partial_{x}u^{n}\partial_{x}z^n=F(m^{n},u^{n})
-F(m^{\infty},u^{\infty})\\
z^n(0,x)=v^n_0-v^\infty_0=m^n_0-m^\infty_0.
\end{array}\right.
\end{equation*}
\eqref{12} and Lemma \ref{continuous} thus ensure that
\begin{align}
	w^n\rightarrow w^\infty\qquad \text{in}\quad  C\big([0,T];B^{\frac{1}{p}}_{p,1}\big).\label{winfty}
\end{align}
Thanks to \eqref{12}, we have
\begin{align*}
\|F(m^{n},u^{n})-F(m^{\infty},u^{\infty})\|_{B^{\frac{1}{p}}_{p,1}}
\leq C(\|m^n-m^\infty\|_{B^{\frac{1}{p}}_{p,1}}
+\|u^n-u^\infty\|_{B^{\frac{1}{p}}_{p,1}}
+\|u^n-u^\infty\|_{B^{\frac{1}{p}}_{p,1}}).
\end{align*}

It follows that for all $n\in\mathbb{N}$,
\begin{align}
	\|z^n(t)\|_{B^{\frac{1}{p}}_{p,1}}	\leq&C\left(\|m^n_0-m^\infty_0\|_{B^{\frac{1}{p}}_{p,1}}
+\int_0^t\|m^n-m^\infty\|_{B^{\frac{1}{p}}_{p,1}}+\|u^n-u^\infty\|_{B^{\frac{1}{p}}_{p,1}}
+\|u^n_x-u^\infty_x\|_{B^{\frac{1}{p}}_{p,1}}{d}\tau\right)\notag\\	\leq&C\left(\|m^n_0-m^\infty_0\|_{B^{\frac{1}{p}}_{p,1}}
+\int_0^t\|u^n_x-u^\infty_x\|_{B^{\frac{1}{p}}_{p,1}}+\|u^n-u^\infty\|_{B^{\frac{1}{p}}_{p,1}}
+\|w^n-w^\infty\|_{B^{\frac{1}{p}}_{p,1}}
+\|z^n\|_{B^{\frac{1}{p}}_{p,1}}{d}\tau\right).\label{zn}
\end{align}
Using the facts that
\begin{itemize}
	\item [-]$m_0^n$ tends to $m_0^{\infty}$ in $B^{\frac{1}{p}}_{p,1}$;
	\item [-]$u^n$ tends to $u^{\infty}$ in $C\big([0,T];B^{\frac{1}{p}}_{p,1}\big)$;
\item [-]$u_x^n$ tends to $u_x^{\infty}$ in $C\big([0,T];B^{\frac{1}{p}}_{p,1}\big)$;
	\item [-]$w^n$ tends to $w^{\infty}$ in $C\big([0,T];B^{\frac{1}{p}}_{p,1}\big)$,
\end{itemize}
and then applying the Gronwall lemma, we conclude that $z^n$ tends to $0$ in $C\big([0,T];B^{\frac{1}{p}}_{p,1}\big)$. By Lemma \ref{existence}--\ref{priori estimate}, we have
$z^\infty=0$ in $C\big([0,T];B^{\frac{1}{p}}_{p,1}\big)$.

Therefore,
\begin{align*}
	\|m^n-m^\infty\|_{L^\infty\big([0,T];B^{\frac{1}{p}}_{p,1}\big)}
\leq&\|w^n-w^\infty\|_{L^\infty\big([0,T];B^{\frac{1}{p}}_{p,1}\big)}
+\|z^n-z^\infty\|_{L^\infty\big([0,T];B^{\frac{1}{p}}_{p,1}\big)}\\
	\leq&\|w^n-w^\infty\|_{L^\infty\big([0,T];B^{\frac{1}{p}}_{p,1}\big)}
+\|z^n\|_{L^\infty\big([0,T];B^{\frac{1}{p}}_{p,1}\big)}\quad\rightarrow 0\quad\text{as}\ n\rightarrow\infty,
\end{align*}
that is
\begin{align*}
	m^n\rightarrow m^\infty\qquad\text{in}\quad C\big([0,T];B^{\frac{1}{p}}_{p,1}\big).
\end{align*}
Hence, we prove the continuous dependence of \eqref{E2} in critial Besov spaces $C\big([0,T];B^{\frac{1}{p}}_{p,1}\big)$ with $p\in[1,+\infty)$.

Consequently, combining with \textbf{Step 1--Step 3}, we finish the proof of Theorem \ref{Thm1}.

\end{proof}

\bigskip

{\bf Acknowledgements}.  This work was
partially supported by NNSFC (No. 11671407 and No. 11801076),FDCT (No. 0091/2013/A3), Guangdong Special Support Program (No. 8-2015)
and the key project of NSF of Guangdong Province (No. 2016A030311004).

\phantomsection
\addcontentsline{toc}{section}{\refname}

\end{document}